\documentclass[11pt]{article}
\usepackage[margin=1in]{geometry} 
\geometry{letterpaper}   

\usepackage[utf8]{inputenc}
\usepackage{amssymb,amsfonts,amsmath,bbm,mathrsfs,stmaryrd}
\usepackage{mathabx}
\usepackage{xcolor}
\usepackage{url}
\usepackage{ogonek} 

\usepackage[all,cmtip]{xy}

\xyoption{all} 

\usepackage[colorlinks,
             linkcolor=black!75!red,
             citecolor=blue,
             pdftitle={hs},
             pdfauthor={Alexandru Chirvasitu},
             pdfproducer={pdfLaTeX},
             pdfpagemode=None,
             bookmarksopen=true
             bookmarksnumbered=true]{hyperref}

\usepackage{tikz,tikz-cd}
\usetikzlibrary{arrows,calc,decorations.pathreplacing,decorations.markings,intersections,shapes.geometric,through,fit,shapes.symbols,positioning,decorations.pathmorphing}
\usepackage{easyReview}

\usepackage[amsmath,thmmarks,hyperref]{ntheorem}
\usepackage{cleveref}

\crefname{section}{Section}{Sections}
\crefformat{section}{#2Section~#1#3} 
\Crefformat{section}{#2Section~#1#3} 

\crefname{subsection}{\S}{\S\S}
\crefformat{subsection}{#2\S#1#3} 
\Crefformat{subsection}{#2\S#1#3} 

\theoremstyle{plain}

\newtheorem{lemma}{Lemma}[section]
\newtheorem{proposition}[lemma]{Proposition}
\newtheorem{corollary}[lemma]{Corollary}
\newtheorem{theorem}[lemma]{Theorem}
\newtheorem{conjecture}[lemma]{Conjecture}

\theoremstyle{nonumberplain}

\theoremstyle{plain}
\theorembodyfont{\upshape}
\theoremsymbol{\ensuremath{\blacklozenge}}

\newtheorem{definition}[lemma]{Definition}

\newtheorem{remark}[lemma]{Remark}
\newtheorem{convention}[lemma]{Convention}
\newtheorem{notation}[lemma]{Notation}

\crefname{definition}{definition}{definitions}
\crefformat{definition}{#2definition~#1#3} 
\Crefformat{definition}{#2Definition~#1#3} 

\crefname{notation}{notation}{notations}
\crefformat{notation}{#2notation~#1#3} 
\Crefformat{notation}{#2Notation~#1#3}

\crefname{ex}{example}{examples}
\crefformat{example}{#2example~#1#3} 
\Crefformat{example}{#2Example~#1#3} 

\crefname{remark}{remark}{remarks}
\crefformat{remark}{#2remark~#1#3} 
\Crefformat{remark}{#2Remark~#1#3} 

\crefname{convention}{convention}{conventions}
\crefformat{convention}{#2convention~#1#3} 
\Crefformat{convention}{#2Convention~#1#3}

\crefname{lemma}{lemma}{lemmas}
\crefformat{lemma}{#2lemma~#1#3} 
\Crefformat{lemma}{#2Lemma~#1#3} 

\crefname{proposition}{proposition}{propositions}
\crefformat{proposition}{#2proposition~#1#3} 
\Crefformat{proposition}{#2Proposition~#1#3} 

\crefname{corollary}{corollary}{corollaries}
\crefformat{corollary}{#2corollary~#1#3} 
\Crefformat{corollary}{#2Corollary~#1#3}

\crefname{conjecture}{conjecture}{conjectures}
\crefformat{conjecture}{#2conjecture~#1#3} 
\Crefformat{conjecture}{#2Conjecture~#1#3}

\crefname{theorem}{theorem}{theorems}
\crefformat{theorem}{#2theorem~#1#3} 
\Crefformat{theorem}{#2Theorem~#1#3} 

\crefname{assumption}{assumption}{Assumptions}
\crefformat{assumption}{#2assumption~#1#3} 
\Crefformat{assumption}{#2Assumption~#1#3} 

\crefname{equation}{}{}
\crefformat{equation}{(#2#1#3)} 
\Crefformat{equation}{(#2#1#3)}

\theoremstyle{nonumberplain}
\theoremsymbol{\ensuremath{\blacksquare}}

\newtheorem{proof}{Proof}
\newcommand\pf[1]{\newtheorem{#1}{Proof of \Cref{#1}}}


\newcommand\bQ{\mathbb Q}
\newcommand\bR{\mathbb R}
\newcommand\bS{\mathbb S}
\newcommand\bT{\mathbb T}

\newcommand\bZ{\mathbb Z}





\newcommand{\qedhere}{\mbox{}\hfill\ensuremath{\blacksquare}}

\numberwithin{equation}{section}

\renewcommand{\square}{\mathrel{\Box}}


\title{The weak Hilbert-Smith conjecture\\
  from a Borsuk-Ulam-type conjecture} \author{Alexandru Chirvasitu,
  \quad Ludwik D\k{a}browski, \quad and\quad Mariusz Tobolski}


\begin{document}

\date{}

\newcommand{\Addresses}{{
  \bigskip
  \footnotesize

  \textsc{Department of Mathematics, University at Buffalo, Buffalo, NY 14260-2900, USA}\par\nopagebreak
  \textit{E-mail address}: \texttt{achirvas@buffalo.edu}

  \medskip

  \textsc{SISSA (Scuola Internazionale Superiore di Studi Avanzati),
    Via Bonomea 265, 34136 Trieste, Italy}\par\nopagebreak
  \textit{E-mail address}: \texttt{dabrow@sissa.it}

  \medskip

  \textsc{Instytut Matematyczny, Polska Akademia Nauk,
    ul. \'Sniadeckich 8, 00-656 Warszawa, Poland}\par\nopagebreak
  \textit{E-mail address}: \texttt{mtobolski@impan.pl} 

}}

\maketitle

\begin{abstract}
We prove a number of results surrounding the Borsuk-Ulam-type conjecture of Baum, D\k{a}b\-row\-ski and Hajac (BDH, for 
short), to the effect that given a free action of a compact group $G$ on a compact space $X$, there are no $G$-equivariant maps 
$X*G\to X$ (with $*$ denoting the topological join). In particular, we prove the BDH conjecture for locally trivial 
principal $G$--bundles. The proof relies on the non-existence of $G$-equivariant 
maps $G^{*(n+1)}\to G^{*n}$, which in turn is a slight strengthening of an unpublished result of M. Bestvina and R. Edwards.
Moreover, we show that the BDH conjecture partially settles a conjecture of Ageev. In turn, the latter implies the weak 
version Hilbert-Smith 
conjecture stating that no infinite compact zero-dimensional group can act freely on a manifold such that the orbit space is finite-
dimensional.
\end{abstract}

\noindent {\em Key words: Hilbert-Smith conjecture, Ageev conjecture, Borsuk-Ulam theorem, $p$-adic integers, free action, Menger compactum, dimension}

\vspace{.5cm}

\noindent{MSC 2010: 22C05; 54H15; 57S10}

\section*{Introduction}

The fifth question on the Hilbert's famous list of problems \cite{hilbert} concerned a characterization of Lie groups. In a modern and commonly accepted form \cite[Theorem 2.7.1]{tao-hilb} the result reads as follows.

\begin{theorem}\label{h5}[Hilbert's fifth problem]
  Let $G$ be a topological group which is locally Euclidean. Then $G$ is isomorphic to a Lie group.
\end{theorem}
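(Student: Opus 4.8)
The plan is to follow the classical solution of Gleason, Montgomery--Zippin and Yamabe, organised as in the reference cited above: first establish, for \emph{arbitrary} locally compact groups, the Gleason--Yamabe structure theorem --- every locally compact group $G$ has an open subgroup $G'$ such that for each neighbourhood $U$ of the identity there is a compact normal subgroup $N\trianglelefteq G'$ with $N\subseteq U$ and $G'/N$ a Lie group --- and then feed the locally Euclidean hypothesis into it. So the three steps are: (i) a locally compact group with no small subgroups is a Lie group; (ii) the Gleason--Yamabe presentation, obtained by approximating $G'$ by the no-small-subgroups (hence Lie) quotients $G'/N$; and (iii) using that a locally Euclidean group is finite-dimensional, locally connected and locally contractible, showing that the kernels $N$ in (ii) must be finite, so that $G$ is Lie. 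Finally one invokes the elementary facts that an extension of a Lie group by a finite group is a Lie group and that a topological group with an open Lie subgroup is a Lie group.

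The technical heart --- and the step I expect to be the main obstacle --- is (i), together with the approximation (ii) that rests on it; I would prove these via \emph{Gleason metrics}. Call a symmetric neighbourhood of $e$ the domain of a Gleason metric if it carries a left-invariant metric with norm $\|\cdot\|$ satisfying an escape estimate ($\|g^n\|\gtrsim n\|g\|$ as long as $g,\dots,g^n$ stay small) and a commutator estimate ($\|[g,h]\|\lesssim \|g\|\,\|h\|$). From such a metric one deduces, first, that $G$ has no small subgroups (an element $g\neq e$ of a subgroup contained in a tiny ball would have all powers in that ball, contradicting escape); and then, pushing the two estimates, that one-parameter subgroups exist and vary continuously, that the exponential map they define is a local homeomorphism onto a neighbourhood of $e$, and that the commutator estimate endows the tangent space with a Lie bracket making these exponential coordinates an analytic atlas --- i.e.\ $G$ is a Lie group. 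The genuinely hard input is the \emph{existence} of a Gleason metric on a locally compact group with no small subgroups: I would build the required norm from the translation action of $G$ on a suitable function space (Gleason's original approach), or, following Hirschfeld and Tao, extract an almost-invariant metric from an ultraproduct of $G$ and verify the escape and commutator estimates by a bootstrap; and then, for a general locally compact group, reduce to a compactly generated open subgroup and approximate it by the Lie quotients $G'/N$ supplied by a Peter--Weyl/Gleason argument, assembling the inverse-limit presentation of (ii).

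Step (iii) is where the locally Euclidean hypothesis is actually used, and --- in a way that is visibly adjacent to, but not circular with, the Hilbert--Smith circle of ideas --- it is comparatively soft, resting on classical dimension theory for compact transformation groups (Montgomery--Zippin), Smith theory, Newman's theorem and invariance of domain. Replacing $G$ by the identity component of the open subgroup from (ii) --- legitimate since local connectedness makes that component open --- I may assume $G$ connected, and it then suffices to show $G$ has no small subgroups. The model case is that of a small compact \emph{totally disconnected} normal subgroup $N\trianglelefteq G$ with $G/N$ a connected Lie group: since $G$ is connected, the conjugation orbit $\{gng^{-1}\}$ of each $n\in N$ is a connected subset of the totally disconnected $N$, hence a point, so $N$ is central; pulling the central extension $1\to N\to G\to G/N\to 1$ back along the universal cover of $G/N$ yields a central extension of a simply connected Lie group, which --- written as an inverse limit of central extensions by the finite quotients of $N$, each of which is a trivial (cover-free) extension --- splits, and pushing the splitting down gives $G\cong N\times(G/N)$ as topological groups; local connectedness then forces $N$ to be discrete, hence finite, and the residual case of a nontrivial \emph{finite} subgroup inside an arbitrarily small neighbourhood is excluded by Newman's theorem (no nontrivial finite group acts on a connected manifold with all orbits small). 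The general case of (iii) reduces to this model case by splitting off the identity component of $N$ via the structure theory of compact groups and controlling dimensions. With (iii) in hand, $G$ has no small subgroups, so (i) makes $G$ a Lie group, and passing back through the open subgroup the original $G$ is a Lie group as well.
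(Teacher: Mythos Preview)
The paper does not prove this theorem: it is quoted as background in the introduction, with the remark that it ``was proved by Gleason \cite[Theorem 3.1]{gl-subgr} and by Montgomery and Zippin \cite[Theorem A]{mozi-subgr} in 1952,'' and no argument is supplied. So there is no ``paper's own proof'' to compare against.

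Your proposal is a faithful high-level outline of the classical Gleason--Montgomery--Zippin--Yamabe route (as organised in \cite{tao-hilb}): NSS $\Rightarrow$ Lie via Gleason metrics, the Gleason--Yamabe inverse-limit structure theorem, and then elimination of small subgroups under the locally Euclidean hypothesis. As a roadmap it is essentially correct; the genuinely deep step, as you acknowledge, is the construction of a Gleason metric on an NSS locally compact group, and your sketch of step (iii) (centrality of a totally disconnected normal $N$ in a connected $G$, splitting the resulting central extension, and invoking Newman's theorem to rule out small finite subgroups) is the standard reduction. One small caveat: the splitting of the central extension $1\to N\to G\to G/N\to 1$ when $G/N$ is simply connected Lie and $N$ is profinite abelian is not quite as automatic as you suggest --- it uses that $H^2$ of a simply connected Lie group with coefficients in a compact abelian group vanishes, which in turn rests on structure theory (e.g.\ reducing to $H^2$ of the Lie algebra, or the vanishing of $\pi_2$ of Lie groups) --- and the ``general case'' reduction you allude to (handling a non-totally-disconnected compact $N$) is where most of the Montgomery--Zippin dimension-theoretic work actually lives. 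But these are expected technicalities in any honest write-up, not gaps in your strategy.
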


The above theorem was proved by Gleason \cite[Theorem 3.1]{gl-subgr} and by Montgomery and Zippin \cite[Theorem A]{mozi-subgr} in 1952. The theorem and the mathematical tools developed in the course of proving it have many applications, including the celebrated Gromov theorem on groups of polynomial growth (see \cite{tao-hilb} for a survey of applications).

It is worth noting that in Hilbert's lecture \cite{hilbert} groups were not treated abstractly, but rather as groups of transformations. Hence, one might argue that the following generalization of \Cref{h5} in fact better captures the original intent behind Hilbert's problem (see e.g. \cite[Conjecture 2.7.2]{tao-hilb}).

\begin{conjecture}\label{hs-strong}[Hilbert-Smith conjecture]
  Let $G$ be a locally compact topological group acting continuously and effectively on a connected finite-dimensional 
  topological manifold $M$. Then, $G$ is isomorphic to a Lie group.
\end{conjecture}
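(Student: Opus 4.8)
\emph{Proof proposal.} The statement as given is one of the well-known open problems, and what this paper establishes is a chain of partial and conditional results leading to a \emph{weak} form of it; so the plan here is to lay out the standard route towards the conjecture and to indicate precisely where it is blocked. The first step is the classical reduction --- due in essence to Montgomery and Zippin --- of the Hilbert--Smith conjecture to actions of $p$-adic groups, and this is where \Cref{h5} enters. Given a hypothetical counterexample, namely a non-Lie locally compact $G$ acting continuously and effectively on a connected finite-dimensional manifold $M$, one localizes: by the no-small-subgroups characterization of Lie groups among locally compact groups, one of the tools behind \Cref{h5}, one may pass to a small compact subgroup of $G$ that is still not a Lie group, and then, using the structure theory of compact groups together with Newman's theorem (needed to exclude those non-Lie compact groups containing no copy of $\bZ_p$, such as infinite products of distinct finite cyclic groups), one extracts a closed subgroup isomorphic to the $p$-adic integers $\bZ_p$ for some prime $p$, still acting effectively on $M$. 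It therefore suffices to show that for every prime $p$, the group $\bZ_p$ admits no continuous effective action on a connected finite-dimensional topological manifold.

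The second step connects the $\bZ_p$ case to the ideas of this paper. Since the proper closed subgroups of $\bZ_p$ are the open subgroups $p^k\bZ_p\cong\bZ_p$, a Newman-type argument --- to the effect that the fixed-point set of a nontrivial $p$-adic action on a connected manifold is nowhere dense --- lets one pass to a $\bZ_p$-invariant submanifold on which the action is free. Granting, in addition, that the orbit space of such a free action is finite-dimensional (Yang's dimension theory for $p$-adic transformation groups suggests it should be, with $\dim(M/\bZ_p)=\dim M+2$), one would then have an infinite compact zero-dimensional group acting freely on a manifold with finite-dimensional orbit space --- exactly the situation that the \emph{weak} Hilbert--Smith conjecture excludes. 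And the weak conjecture is what the body of the paper derives, via a conjecture of Ageev, from the Borsuk--Ulam-type conjecture of Baum, D\k{a}browski and Hajac (BDH) on the non-existence of equivariant maps $X*G\to X$ --- concretely, of maps $G^{*(n+1)}\to G^{*n}$, a slight strengthening of an unpublished result of Bestvina and Edwards.

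The main obstacle is that this final step is doubly conditional, which is precisely why the conjecture as stated remains open. First, the BDH conjecture is itself unproven: this paper reduces the weak Hilbert--Smith conjecture to it rather than settling it, and proves the BDH conjecture unconditionally only in the special case of locally trivial principal bundles, whereas a free $\bZ_p$-action on a manifold is certainly not of that form. Second, it is not known in general that an effective --- or even free --- $\bZ_p$-action on a finite-dimensional manifold must have a finite-dimensional orbit space, and removing that hypothesis is part of what separates the strong conjecture from the weak one. In brief: \Cref{h5} together with classical transformation-group theory reduces the Hilbert--Smith conjecture to excluding free $\bZ_p$-actions on finite-dimensional manifolds; the present paper reduces \emph{that}, under the finite-dimensionality hypothesis, to a Borsuk--Ulam-type statement; and disposing of the two remaining gaps is the substance of the open conjecture.
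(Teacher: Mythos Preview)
Your assessment is correct: \Cref{hs-strong} is stated in the paper as an open conjecture, not a theorem, and the paper offers no proof of it. Your outline of the classical reduction to $\bZ_p$-actions and of the two remaining gaps (the unproven BDH conjecture in the relevant generality, and the unknown finite-dimensionality of the orbit space) accurately reflects both the state of the art and the paper's own discussion in the introduction and in \Cref{se.main}.

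Two small corrections are worth making. First, the paper attributes the reduction to the $p$-adic case specifically to Lee \cite{lee-pad}, via the Gleason--Yamabe structure theorem and Newman's results, rather than directly to Montgomery--Zippin; your sketch of the mechanism is nonetheless accurate. Second, your parenthetical that Yang's dimension theory ``suggests'' $\dim(M/\bZ_p)=\dim M+2$ slightly overstates what is known: the paper records (combining Smith, Yang, and Alexandroff) only the dichotomy that $\dim(M/\bZ_p)$ is either $\dim M+2$ or infinite, and it is precisely the failure to rule out the infinite case that separates the weak conjecture from the strong one. You do flag this correctly as an obstacle later, but the earlier phrasing should not imply that finiteness is expected.
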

It is known that the Hilbert-Smith conjecture can be reduced to studying actions of the groups that are very far from being Lie, namely the groups $\bZ_p$ of $p$-adic integers for arbitrary primes $p$. This was proved by Lee \cite[Theorem 3.1]{lee-pad} using the Gleason-Yamabe theorem on the structure of locally compact groups \cite[Theorem 5']{yam-thm}, and some results of Newman \cite{new-per}. We state the reduced conjecture following \cite[Conjecture 2.7.3]{tao-hilb}.
\begin{conjecture}\label{cj.effp}[Hilbert-Smith conjecture for $p$-adic actions]
  It is not possible for a $p$-adic group $\mathbb{Z}_p$ to act continuously and effectively on a connected finite-dimensional topological manifold $M$.
\end{conjecture}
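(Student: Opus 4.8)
The plan is to reduce \Cref{cj.effp} to a statement about \emph{free} $\bZ_p$-actions and then feed that statement into the Borsuk--Ulam machinery assembled in this paper. I should be candid at the outset: the conjecture as stated is, at present, open, and what the argument running from the BDH conjecture through (the relevant part of) Ageev's conjecture to the weak Hilbert--Smith conjecture actually delivers is the \emph{weak} form, in which the action is free and the orbit space is finite-dimensional. So the proposal below is a roadmap in which the free case gets handled and the passage from effectiveness to freeness is the genuine obstacle.

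First I would localize the action. Suppose $\bZ_p=\varprojlim\bZ/p^k$ acts effectively on a connected $M^n$. For each $k\ge 1$ the closed subgroup $p^k\bZ_p$ acts nontrivially (effectiveness), so by Newman's theorem, in the form valid for $p$-adic actions, its fixed-point set $M^{p^k\bZ_p}$ cannot contain an open set; being closed, it is nowhere dense. Hence the singular set $\bigcup_{k\ge 1}M^{p^k\bZ_p}$ is meager, and the free locus $X:=\{x\in M:\ G_x=\{0\}\}$ --- the complement of this union --- is a dense, $\bZ_p$-invariant $G_\delta$ on which $\bZ_p$ acts freely. Now $\dim X\le n$, and by the Smith--Yang dimension theory for $p$-adic actions one expects $X/\bZ_p$ to remain finite-dimensional, of dimension at most $n+2$. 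One is thus reduced to a free $\bZ_p$-action with finite-dimensional orbit space; the remaining task is to replace $X$ by, or compare it with, an actual manifold so that the weak Hilbert--Smith statement applies.

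Granting that last reduction, the contradiction comes straight from this paper. The implications it establishes, going from the BDH conjecture for locally trivial principal bundles to the relevant part of Ageev's conjecture and thence to the weak Hilbert--Smith conjecture, have as their endpoint exactly the assertion that no infinite compact zero-dimensional group, $\bZ_p$ in particular, acts freely on a manifold with finite-dimensional orbit space. The technical heart of this is the sharpened Bestvina--Edwards fact recalled in the introduction, the non-existence of $G$-equivariant maps $G^{*(n+1)}\to G^{*n}$, which, through an obstruction-theoretic and Smith-theoretic argument, rules out the equivariant comparison maps between successive joins that a free $p$-adic action with finite-dimensional orbit space would otherwise generate. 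Feeding the action produced in the previous paragraph into this endpoint yields the desired contradiction.

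The main obstacle, then, is the first step and essentially nothing else: promoting the free locus $X$ --- a priori only a dense $G_\delta$ in $M^n$, so neither compact nor locally Euclidean --- to a manifold while keeping its orbit space finite-dimensional, or, equivalently, directly excluding free $\bZ_p$-actions on finite-dimensional manifolds. This is the dimension-raising problem for $p$-adic transformation groups on manifolds, known only in low dimensions and under regularity hypotheses such as Lipschitz or H\"older continuity. Without such an input the arguments above establish the weak conjecture but leave \Cref{cj.effp} open.
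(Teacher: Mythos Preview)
The statement you were asked to address is \Cref{cj.effp}, which the paper records as an open \emph{conjecture}; there is no proof of it in the paper to compare against. You recognize this yourself, and your roadmap is accurate: the paper's machinery (BDH $\Rightarrow$ Ageev-type non-existence $\Rightarrow$ weak Hilbert--Smith) terminates at \Cref{cj.hsp}, the \emph{free}-action version with finite-dimensional orbit space, not at the effective-action \Cref{cj.effp}.

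Your attempted bridge from effective to free actions is where the genuine gap lies, and you have identified it correctly but perhaps understated how serious it is. The free locus $X\subset M$ you produce is only a $G_\delta$, not a manifold, so the endpoint of the paper's implication chain (which requires $M$ to be a manifold, or at least an ANR, in \Cref{th.4}) does not apply to $X$ directly. Your appeal to ``Smith--Yang dimension theory'' to bound $\dim(X/\bZ_p)$ is also not a result one can simply invoke: the Yang dimension formula $\dim(M/\bZ_p)\in\{n+2,\infty\}$ is proved for free actions on manifolds, and extending it to the free locus inside an effective action is part of the difficulty, not a tool available off the shelf. In short, your proposal does not close the gap between the weak and full conjectures, and neither does the paper --- but the paper never claims to.
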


Let us recall some results that partially confirm the Hilbert-Smith conjecture. In \cite{mozi-top} Montgomery and Zippin observed that \Cref{hs-strong} is true for transitive actions on topological manifolds and for smooth actions on smooth manifolds. Repov\u{s} and \u{S}\u{c}epin \cite{resc-lip} proved it for Lipschitz actions. Then Martin announced the proof for quasiconformal actions on Riemannian manifolds in \cite{mar-conf}. There is also a proof for Hölder actions given by Maleshich \cite{mal-hol}. Recently, Pardon showed that \Cref{hs-strong} is true for three-manifolds \cite{par-3}. In the survey \cite{dran-surv} Dranishnikov gives an account of various partial results and reduces a weaker version of the conjecture to two other problems. In its full generality, the Hilbert-Smith conjecture remains unsolved.

We base our approach to the Hilbert-Smith conjecture on classifying spaces for $p$-adic groups and the theory of Menger compacta \cite{menger,best-char}. In \cite{dran-acts} Dranishnikov shows that any zero-dimensional compact metric group (e.g. $\mathbb{Z}_p$) can act freely on universal Menger compacta. Such actions can be fairly exotic, e.g. for the $n$-dimensional Menger compactum $\mu^n$ the dimension of the orbit space $\mu^n/\mathbb{Z}_p$ can exceed $n$.

Nevertheless, there is a class of actions for which $\dim(\mu^n/\mathbb{Z}_p)=n$; these are all proven isomorphic (for given $G$, in the category of $G$-spaces) by Ageev in \cite{ag-charfree}. The results of the latter paper also suggest that the actions in question satisfy a universality condition to the effect that for any free action of $G$ on an at-most-$n$-dimensional compact space $Y$, every equivariant map $Y\to \mu^n$ is approximable by an equivariant embedding. This is referred to as `strong $G$-$n$ universality' in the discussion preceding \cite[Theorem B]{ag-charfree}, and is an equivariant version of the celebrated Bestvina recognition criterion for Menger compacta (\cite{best-char} and \cite[Theorem A (b)]{ag-charfree}).

In \cite{ag-hs} Ageev states a certain conjecture about Menger compacta which if true would imply a weaker version of the Hilbert-Smith conjecture for free actions on manifolds with finite-dimensional orbit spaces. The following statement appears as \cite[Conjecture]{ag-hs}.

\begin{conjecture}\label{cj.ag}[Ageev]
  Suppose two Menger compacta $\mu^m$ and $\mu^n$ with $m>n$ are acted on freely by a non-trivial zero-dimensional compact metric group $G$. Then, there is no equivariant map $f:\mu^m\rightarrow\mu^n$.
\end{conjecture}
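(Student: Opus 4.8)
The plan is to deduce \Cref{cj.ag} from the BDH conjecture, via Ageev's classification of the standard free actions and the non-existence of $G$-equivariant maps $G^{*(n+1)}\to G^{*n}$. First I would reduce the group: the restriction of a free action to a closed subgroup is again free, and every non-trivial zero-dimensional compact metrizable group contains a closed subgroup isomorphic to $\bZ/p$ or to $\bZ_p$ for some prime $p$ (pass to the closure $H$ of the subgroup generated by a non-trivial element; $H$ is a quotient of $\widehat{\bZ}$, hence contains a closed copy of $\bZ/p$ or of $\bZ_p$). So I may assume $G\in\{\bZ/p,\bZ_p\}$; in particular $G$ is compact and zero-dimensional, so both the BDH conjecture and its bundle case established above apply to it.

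Now the join trick. Suppose, towards a contradiction, that $f\colon\mu^m\to\mu^n$ is $G$-equivariant with $m>n$, both Menger compacta carrying free $G$-actions. Form the topological join $\mu^n*G$, with $G$ acting diagonally; this action is free (it is free already on the $G$-factor), $\mu^n*G$ is compact metrizable, and $\dim(\mu^n*G)\le n+1\le m$ since $G$ is zero-dimensional. If I can produce a $G$-equivariant map $\phi\colon\mu^n*G\to\mu^m$, then $f\circ\phi\colon\mu^n*G\to\mu^n$ is $G$-equivariant, contradicting the BDH conjecture applied to the free $G$-space $X=\mu^n$. Hence \Cref{cj.ag} is reduced to constructing such a $\phi$; note that the action on the \emph{target} $\mu^n$ is allowed to be arbitrary here, so the whole difficulty concentrates on the free action on the \emph{source} $\mu^m$.

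When $\mu^m$ carries Ageev's standard free action this is immediate: that action is $G$-$m$-universal, hence admits a $G$-equivariant map into it from every free $G$-action on an at-most-$m$-dimensional compact metric space, in particular from $\mu^n*G$, yielding $\phi$ and the desired contradiction. One may also arrange the target to carry a standard action, by postcomposing $f$ with a $G$-equivariant embedding $\mu^n\hookrightarrow\mu^n_{\mathrm{std}}$ coming from $G$-$n$-universality of the standard $\mu^n$; this is convenient if one wants the contradiction to rely only on the locally trivial case of the BDH conjecture proved here, and thereby ultimately on the non-existence of equivariant maps $G^{*(n+1)}\to G^{*n}$.

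The hard part will be removing the standardness hypothesis on the source. By Dranishnikov's examples a free $G$-action on $\mu^m$ can have $\dim(\mu^m/G)>m$, its orbit map need not be locally trivial, and there is no obvious reason for it to be $G$-$m$-universal, so $\phi$ need not exist outright. The route I would try is to show that the mere existence of a $G$-equivariant $f\colon\mu^m\to\mu^n$ already forces enough room in $\mu^m$ --- e.g. a $G$-equivariant map $\mu^n*G\to\mu^m$, or even an equivariant copy of $\mu^n*G$ inside $\mu^m$ --- by an extension/selection argument carried out over the zero-dimensional orbit map $\mu^m\to\mu^m/G$ and using that $\mu^m$ is everywhere locally an $m$-dimensional Menger compactum. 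Failing that, the argument still yields \Cref{cj.ag} for arbitrary free actions on the target and standard (in particular locally trivial) ones on the source --- which is the precise sense in which the BDH conjecture ``partially'' settles Ageev's conjecture --- and I expect extracting a free $G$-join from a possibly badly non-locally-trivial action to be exactly where the real work lies.
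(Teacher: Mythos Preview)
The paper does not prove \Cref{cj.ag} in full; it remains a conjecture. What the paper establishes is the partial result \Cref{th.main}: if additionally $\dim(\mu^n/G)\le m$, then (assuming BDH for $\mu^n$) there is no equivariant $\mu^m\to\mu^n$. Your join-and-compose strategy is exactly the paper's, but you have the universality condition and the location of the obstruction wrong, and this is a genuine gap.

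First, Ageev's free $m$-universality (\Cref{de.univ}, \Cref{th.mug-univ}) is stated for \emph{orbit-space} dimension: one needs $\dim\bigl((\mu^n*G)/G\bigr)\le m$, not $\dim(\mu^n*G)\le m$. Your check ``$\dim(\mu^n*G)\le n+1\le m$ since $G$ is zero-dimensional'' is correct for the total space but irrelevant. The paper's \Cref{le.orb-dim} gives
\[
\dim\bigl((\mu^n*G)/G\bigr)\le\max\bigl(\dim(\mu^n/G),\,n+1\bigr),
\]
so the needed bound follows only under the extra hypothesis $\dim(\mu^n/G)\le m$; Dranishnikov's exotic actions on $\mu^n$ (which you yourself cite) show this can fail.

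Second, you have the roles of source and target reversed. By \Cref{th.mug-univ}, \emph{every} free action of a zero-dimensional compact group on $\mu^m$ is free $m$-universal---no ``standard'' assumption on the source is needed, and your proposed ``hard part'' dissolves. The genuine obstruction sits on the \emph{target} side, via $\dim(\mu^n/G)$, and this is precisely the hypothesis the paper is unable to remove. Your closing paragraph, which claims the argument works for arbitrary target actions and standard source actions, is thus exactly backwards relative to what is actually provable.

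Finally, the reduction to $G\in\{\bZ/p,\bZ_p\}$ is harmless but unnecessary: the paper works directly with the given zero-dimensional compact $G$.
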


These ideas were followed by Yang in \cite{yang-class}, where a free Menger $\mathbb{Z}_p$-compactum is explicitly constructed.

In this paper we focus on the 
the generalized compact-Hausdorff-space Borsuk-Ulam conjecture proposed by Baum, D\k{a}browski and 
Hajac~\cite[Conjecture~2.2]{BDH}. In order to state it, recall first that for two spaces $X$ and $Y$, their {\it topological join} $X*Y$ 
is the space defined by
\begin{equation*}
  X*Y = X\times Y\times [0,1]/\sim,
\end{equation*}
where the equivalence relation identifies all $(x,y_0,0)$, $x\in X$ for fixed $y_0\in Y$, and similarly identifies all $(x_0,y,1)$, $y\in Y$ for fixed $x_0\in X$.

\begin{conjecture}\label{cj.bdh}
  Let $X$ be a compact Hausdorff space equipped with a continuous free action of a non-trivial compact Hausdorff group $G$. Then, for the diagonal action of $G$ on the topological join $X \ast G$, there does not exist a $G$-equivariant continuous map $f : X \ast G\rightarrow X$.
\end{conjecture}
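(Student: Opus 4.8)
The plan is to prove \Cref{cj.bdh} under the additional hypothesis that $X\to X/G$ is a locally trivial principal $G$-bundle; this holds automatically when $G$ is a compact Lie group (by the Gleason--Montgomery--Zippin slice theorem), and it is exactly the feature the argument exploits. The idea is to reduce the non-existence of a $G$-equivariant map $X\ast G\to X$ to the non-existence of a $G$-equivariant map $G^{\ast(n+1)}\to G^{\ast n}$ for a suitable finite $n$ --- the slight strengthening of the Bestvina--Edwards result mentioned in the abstract, which I treat here as a known input.

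First I would use the bundle structure to manufacture an equivariant ``test map'' $q\colon X\to G^{\ast n}$ for some finite $n$. Since $B:=X/G$ is compact Hausdorff, a locally trivial principal $G$-bundle over it has a \emph{finite} trivializing open cover $\{U_i\}_{i=1}^n$, and therefore (by normality of $B$) a subordinate partition of unity. Milnor's construction then presents $X$ as the pullback of $G^{\ast n}\to G^{\ast n}/G$, a finite stage of Milnor's classifying bundle, along the associated classifying map $B\to G^{\ast n}/G$, and in particular supplies a $G$-equivariant continuous map $q\colon X\to G^{\ast n}$. (The only thing used is that such a $q$ exists for \emph{some} finite $n$, i.e.\ that the bundle has finite Schwarz genus.)

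Next, suppose toward a contradiction that $f\colon X\ast G\to X$ is $G$-equivariant. Using associativity of the join and the fact that joining a $G$-equivariant map with $\id$ again gives a $G$-equivariant map, one iterates $f$: set $f_1:=f$ and $f_{k+1}:=f\circ(f_k\ast\id_G)\colon X\ast G^{\ast(k+1)}\to X$, identifying $(X\ast G^{\ast k})\ast G=X\ast G^{\ast(k+1)}$; each $f_k$ is then $G$-equivariant. Let $\iota\colon G^{\ast(n+1)}\hookrightarrow X\ast G^{\ast(n+1)}$ be the inclusion of $G^{\ast(n+1)}$ as the $t=0$ end of the join $X\ast G^{\ast(n+1)}$, which is $G$-equivariant for the diagonal action. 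Then
\[
  q\circ f_{n+1}\circ\iota\colon G^{\ast(n+1)}\longrightarrow G^{\ast n}
\]
is a $G$-equivariant continuous map, contradicting the non-existence statement for maps $G^{\ast(n+1)}\to G^{\ast n}$. Hence no such $f$ can exist.

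In this scheme the reduction above is soft --- it is just Milnor's construction together with formal properties of the join --- so the real difficulty, and the step I expect to be the main obstacle, is the input itself: establishing that there is no $G$-equivariant map $G^{\ast(n+1)}\to G^{\ast n}$ in the precise form and generality required, which is where the Borsuk--Ulam-type cohomological-index machinery (or a sharpening of the Bestvina--Edwards argument) has to do the work; a comparatively minor additional point is the routine reduction permitting one to assume $G$ metrizable while proving that input. Finally, I would flag that dropping local triviality looks essential: for a non-Lie compact group $G$ the orbit map $X\to X/G$ need not be locally trivial and need not factor through any finite Milnor stage, so the test map $q$ disappears and the reduction collapses --- consistent with \Cref{cj.bdh} being open in general.
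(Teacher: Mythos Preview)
Your proposal is correct and, for the reduction step, follows essentially the same route as the paper: the paper's \Cref{prop.eq} shows that a $G$-map $X\ast G\to X$ for a locally trivial principal $G$-bundle $X$ with $\mathrm{ind}_G(X)=n$ yields, by iterating the join and using the classifying map $X\to E_nG=G^{\ast(n+1)}$, a $G$-equivariant map $E_{n+1}G\to E_nG$, exactly as you construct $q\circ f_{n+1}\circ\iota$. The one substantive difference is that you treat the non-existence of $G$-maps $G^{\ast(n+1)}\to G^{\ast n}$ as a known input, whereas the paper supplies a proof of this (\Cref{th.eb}) for arbitrary non-trivial compact Hausdorff $G$: the torsion case is classical, and for torsion-free $G$ the paper builds incompressible principal $G$-bundles over $\mathbb{T}^n$ (via an embedding $\mathbb{Z}^n\hookrightarrow G$ that survives in $\pi_0(G)$) and contrasts the resulting trivializing cover against the Lusternik--Schnirelmann category of $\mathbb{T}^n$; your anticipated ``cohomological-index machinery'' and metrizability reduction are not how the paper handles it.
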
 

Here we show in Section~\ref{se.main} that the positive answer to \Cref{cj.bdh} would prove \Cref{cj.ag} as well. This result would then be sufficient to resolve affirmatively the finite-dimensional-orbit-space version of the Hilbert-Smith conjecture for free actions in \Cref{cor.main}.

The proposed reformulation of the problem is justified by recent work on and interest in Borsuk-Ulam-type theorems. \Cref{cj.bdh} 
is known to be true for actions of compact Hausdorff groups with torsion elements on compact Hausdorff spaces, 
which can be deduced from 
\cite[Theorem 6.2.6]{mt-bu} or \cite[Corrolary~3.1]{vol}, and was recently proved using different methods by Passer in 
\cite[Theorem~2.8]{saturated}. In \Cref{th.bu} in Section~\ref{se.bu}, we obtain a new Borsuk-Ulam-type result, 
namely for actions of a~compact Hausdorff group $G$ on a compact Hausdorff space $X$ such that $X\to X/G$ is a locally trivial 
bundle. We prove it by strengthening a theorem of M.~Bestvina and R.~Edwards (unpublished).
Note that \Cref{th.bu} includes previous Borsuk-Ulam-type results since free actions of finite groups automatically satisfy the 
locally trivial assumption. Furthermore, in \cite[Section~3]{cp} Chirvasitu and Passer propose a possible approach to 
\Cref{cj.bdh} in its full generality.


\section{Borsuk-Ulam-type theorem for locally trivial $G$-bundles}\label{se.bu}


The aim of the present Section is to outline a proof of the following Borsuk-Ulam-type result: 

\begin{theorem}\label{th.eb}
  Let $G$ be a non-trivial compact Hausdorff group, and for positive integers $n$ denote $E_nG:=G^{*(n+1)}$. Then, there are no $G$-equivariant maps $E_{n+1}G\to E_nG$. 
\end{theorem}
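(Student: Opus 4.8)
The plan is to leverage the classical fact that the join $E_nG = G^{*(n+1)}$ is $(n-1)$-connected: it is the $n$-fold join of a nonempty space with itself (up to the convention shift), so it carries a free $G$-action and serves as the $n$-th stage of Milnor's construction of a universal bundle. Consequently $B_nG := E_nG/G$ approximates the classifying space $BG$ through dimension roughly $n$. Suppose toward a contradiction that a $G$-equivariant map $f\colon E_{n+1}G\to E_nG$ exists. Since $G$ acts freely on both joins, $f$ descends to a map $\bar f\colon B_{n+1}G\to B_nG$ of orbit spaces covering the identity on $BG$ in the sense that the composite $B_{n+1}G\xrightarrow{\bar f} B_nG\hookrightarrow BG$ is homotopic to the canonical inclusion $B_{n+1}G\hookrightarrow BG$ (both are classified by the pullback of $E_{n+1}G\to B_{n+1}G$, which maps to $EG\to BG$). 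The first step is therefore to set up this bundle-theoretic translation carefully, reducing the problem to a statement about maps of the finite skeleta of $BG$.

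The second step is to extract a cohomological obstruction. Because $G$ is a nontrivial compact Hausdorff group, it contains either a nontrivial torus or a nontrivial finite cyclic subgroup (by the structure theory of compact groups, or even just by picking a nontrivial closed subgroup that is either $S^1$ or $\mathbb Z/p$); restricting the whole picture to such a subgroup $H\leqslant G$, and using that $E_nG$ is a free $H$-space that is $(n-1)$-connected, we get an $H$-equivariant map $E_{n+1}H\text{-type space}\to E_nH\text{-type space}$ after a further equivariant approximation. For $H=\mathbb Z/p$ one has $H^*(B\mathbb Z/p;\mathbb F_p)=\mathbb F_p[x]$ (with a polynomial generator in degree $2$, or degrees $1$ and $2$ for $p=2$), and the inclusion $B_nH\hookrightarrow BH$ is a cohomology isomorphism through degree $n$; for $H=S^1$ one has $H^*(BS^1)=\mathbb Z[c]$ with $\deg c=2$ and $B_nS^1\simeq \mathbb{CP}^{\lfloor n/2\rfloor}$ or so. The nonexistence of $\bar f$ then follows because $\bar f$ would force the degree-$d$ power of the generator, which is nonzero in $H^*(B_{n+1}H)$ for $d$ up to about $n+1$, to factor through $H^*(B_nH)$, where that same power already vanishes once $2d>n$ — a contradiction for the appropriate choice of $d$.

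The main obstacle, and the place where genuine work beyond the classical Bestvina--Edwards argument is needed, is the \emph{equivariant approximation and transfer between $G$ and its subgroup $H$}: the hypothesis gives a map out of $E_{n+1}G$, not out of $E_{n+1}H$, and a priori restricting the $G$-action to $H$ changes neither join, so one must instead compare $E_nG$ with $E_nH$ via the (non-equivariant-in-an-obvious-way) fact that both are highly connected free spaces for their respective groups and use an obstruction-theoretic lifting to produce the needed $H$-map, controlling connectivity at each stage. A cleaner route, which I would pursue in parallel, is to avoid subgroups entirely and argue directly with $\check{\mathrm{C}}$ech cohomology of $B_nG$: one shows $H^{n+1}(B_{n+1}G)\to H^{n+1}(B_nG)$ cannot be compatible with a section-like map $\bar f$ because the Euler-class-type characteristic class living in $H^{n+1}$ (or $H^{n+2}$) of $B_{n+1}G$ restricts nontrivially from $BG$ but dies in $B_nG$ for dimension reasons — this is exactly the strengthening of Bestvina--Edwards alluded to in the introduction. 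Either way, the crux is pinning down the precise degree in which the relevant characteristic class survives, and verifying that the orbit-space map $\bar f$ is forced to respect it.
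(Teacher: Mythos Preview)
Your proposal contains a genuine gap at its central structural step. You assert that a nontrivial compact Hausdorff group $G$ ``contains either a nontrivial torus or a nontrivial finite cyclic subgroup,'' and then plan to reduce to $H=S^1$ or $H=\bZ/p$. This is false: the additive group $\bZ_p$ of $p$-adic integers is compact Hausdorff, torsion-free (so it has no nontrivial finite cyclic subgroup), and totally disconnected (so it admits no nontrivial homomorphism from $S^1$). This is not a peripheral case---groups like $\bZ_p$ are exactly the groups for which the theorem is new relative to the torsion case already covered in the literature (see \Cref{re.tors}). Your fallback route, ``argue directly with \v{C}ech cohomology of $B_nG$,'' is not fleshed out enough to be assessable; for a profinite, torsion-free $G$ it is not clear which characteristic class you intend to track, in what coefficients, or why the relevant class is nonzero in the right degree on $B_{n+1}G$.

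The paper proceeds quite differently in the torsion-free case. Rather than restricting to a small subgroup, it embeds $\bZ^n$ into $G$ so that the composite $\bZ^n\to G\to \pi_0(G)$ is injective (\Cref{le.emb}), and uses this to build an \emph{incompressible} principal $G$-bundle over the torus $\bT^n$ (\Cref{le.sc}, \Cref{cor.torus}): one whose restriction to every non-nullhomotopic loop is nontrivial. An equivariant map $E_nG\to E_{n-1}G$ would allow this bundle to be classified by a map to $B_{n-1}G$, which by \Cref{le.bn} is covered by $n$ trivializing opens. Pulling these back to $\bT^n$ yields $n$ opens each of which, by incompressibility, contains no nontrivial loop and is therefore nullhomotopic in $\bT^n$. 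This contradicts $LS(\bT^n)=n$ (\Cref{pr.lst}). The argument thus replaces your cohomological obstruction with a Lusternik--Schnirelmann category obstruction, and replaces the (unavailable) subgroup $S^1$ or $\bZ/p$ with the torus as a \emph{base space} carrying a carefully constructed bundle.
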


To the best of our knowledge, the above theorem was proved for zero-dimensional compact groups
by M. Bestvina and R. Edwards. There is an immediate corollary:
\begin{corollary}
Let $G$ be a compact Hausdorff group and for a $G$-space $X$ define
\begin{equation}
  \label{eq:3}
  {\rm ind}_G(X):={\rm min}\{n:\exists~G\text{-}{\rm map}~X\to E_nG\}.
\end{equation}
Then, ${\rm ind}_G(E_nG)=n$.
\end{corollary}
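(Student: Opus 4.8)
The corollary follows readily from \Cref{th.eb} together with the elementary observation that the $G$-spaces $E_nG$ form an ascending chain in the sense that there are canonical $G$-equivariant inclusions $E_nG\hookrightarrow E_{n+1}G$. The plan is as follows.

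First I would record that $\mathrm{ind}_G(E_nG)\le n$. This is immediate from the definition \Cref{eq:3} upon exhibiting a $G$-map $E_nG\to E_nG$, and the identity serves. (One could alternatively note that $\mathrm{ind}_G$ is monotone along the inclusions $E_kG\hookrightarrow E_\ell G$ for $k\le\ell$, but this is not needed for the upper bound.)

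Next I would establish the reverse inequality $\mathrm{ind}_G(E_nG)\ge n$, equivalently that there is no $G$-map $E_nG\to E_mG$ for any $m<n$. Suppose toward a contradiction that such a map exists for some $m\le n-1$. Composing it with the canonical $G$-equivariant inclusion $E_mG\hookrightarrow E_{n-1}G$ (the iterate of the inclusions $E_kG\hookrightarrow E_{k+1}G$, which on joins is induced by the inclusion of the first $k+1$ copies of $G$ into the first $k+2$, or concretely by $[g_0,\dots,g_k]\mapsto[g_0,\dots,g_k,e]$ up to the join parametrization, and is manifestly $G$-equivariant for the diagonal actions), we obtain a $G$-map $E_nG\to E_{n-1}G$. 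Since $E_nG=E_{(n-1)+1}G$, this directly contradicts \Cref{th.eb} applied with $n$ replaced by $n-1$. Hence no such $m$ exists, and combining with the upper bound gives $\mathrm{ind}_G(E_nG)=n$.

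The only point that requires any care — and it is a minor one — is to confirm that the inclusions $E_kG\hookrightarrow E_{k+1}G$ are genuinely $G$-equivariant continuous maps for the diagonal $G$-action on the joins; this is routine from the explicit description of the join and of the diagonal action, since adjoining a fixed basepoint coordinate $e\in G$ (alternatively, the identification of $E_kG$ with the subspace of $E_{k+1}G$ where the last join-parameter vanishes) commutes with the simultaneous left translation by $G$ on all coordinates. No genuine obstacle arises here; the entire content of the corollary is carried by \Cref{th.eb}, which has already been granted.
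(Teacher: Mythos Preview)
Your argument is correct and is precisely the unpacking the paper has in mind when it calls this an ``immediate corollary'' of \Cref{th.eb}: the identity gives the upper bound, and composing a hypothetical $G$-map $E_nG\to E_mG$ ($m<n$) with the equivariant inclusion $E_mG\hookrightarrow E_{n-1}G$ (via the subspace where the last join parameters vanish) contradicts \Cref{th.eb}. One cosmetic point: the paper's diagonal action is by \emph{right} translation on the join factors (not left), and your phrasing ``adjoining a fixed basepoint $e$'' is only equivariant because that coordinate sits at join-parameter $0$ and is hence collapsed---your parenthetical alternative description is the clean one.
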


The finiteness of ${\rm ind}_G(X)$ is equivalent to the condition that $X\to X/G$ is a locally trivial principal $G$-bundle. 
Hence \Cref{th.eb} is equivalent to the following (see \Cref{prop.eq} below):
\begin{theorem}\label{th.bu}
Let $G$ be a non-trivial compact Hausdorff group and let $X$ be a compact Hausdorff $G$-space such that $X\to X/G$ is
a locally trivial principal $G$-bundle. Then there is no $G$-equivariant map $X\ast G\to X$.
\end{theorem}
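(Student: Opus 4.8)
The plan is to derive \Cref{th.bu} from \Cref{th.eb}; this is the non-trivial half of the equivalence recorded in \Cref{prop.eq}, the reverse implication being the observation that $E_nG=G^{*(n+1)}$ is itself a compact Hausdorff $G$-space whose orbit map is a locally trivial principal $G$-bundle (Milnor's join construction) and that $E_nG\ast G\cong G^{*(n+2)}=E_{n+1}G$ $G$-equivariantly. The key point is to argue with the \emph{co-index}
\[
  \mathrm{coind}_G(X):=\max\{\,n\ :\ \text{there is a }G\text{-map }E_nG\to X\,\}
\]
rather than the index $\mathrm{ind}_G$ of \Cref{eq:3}: running the argument with $\mathrm{ind}_G$ would require a lower bound on $\mathrm{ind}_G(X\ast G)$ that is not formal, whereas $\mathrm{coind}_G$ increases strictly under $-\ast G$ in the presence of a self-map, and is bounded above by virtue of \Cref{th.eb}.

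First I would check that $\mathrm{coind}_G(X)$ is a well-defined natural number for every non-empty compact Hausdorff $G$-space $X$ with $X\to X/G$ a locally trivial principal $G$-bundle. For finiteness: $X/G$ is compact Hausdorff, hence normal, so it is covered by finitely many — say $m+1$ — open sets over which the bundle is trivial; choosing for each a $G$-equivariant fibre coordinate $\sigma_i\colon X|_{U_i}\to G$ together with a partition of unity $\{\rho_i\}$ subordinate to this cover and pulled back to $X$ along the orbit map, the Milnor-type formula $x\mapsto\sum_i\rho_i(x)\,\sigma_i(x)\in G^{*(m+1)}$ defines a $G$-map $g\colon X\to E_mG$. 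Now given any $G$-map $E_nG\to X$, composition with $g$ yields a $G$-map $E_nG\to E_mG$; since $E_{m+1}G$ sits inside $E_nG$ as a $G$-equivariant subjoin whenever $n\ge m+1$, such a composite would, if $n>m$, restrict to a $G$-map $E_{m+1}G\to E_mG$, contradicting \Cref{th.eb}. Hence $n\le m$ for every admissible $n$, so $\mathrm{coind}_G(X)\le m<\infty$; and $\mathrm{coind}_G(X)\ge0$ because $h\mapsto h\cdot x_0$ is a $G$-map $E_0G=G\to X$ for any chosen $x_0\in X$.

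Granting this, the theorem follows at once. Suppose $f\colon X\ast G\to X$ were a $G$-map, and let $c:=\mathrm{coind}_G(X)$, realised by a $G$-map $e\colon E_cG\to X$. Then $e\ast\mathrm{id}_G\colon E_cG\ast G\to X\ast G$ is a $G$-map for the diagonal actions, and $E_cG\ast G=G^{*(c+1)}\ast G\cong G^{*(c+2)}=E_{c+1}G$ $G$-equivariantly by associativity of the topological join; composing, $f\circ(e\ast\mathrm{id}_G)$ is a $G$-map $E_{c+1}G\to X$, whence $c+1\le\mathrm{coind}_G(X)=c$, a contradiction. The step I expect to carry all the weight is the finiteness of $\mathrm{coind}_G(X)$: this is precisely where compactness of $X$ is used (to get a finite trivialising cover, hence a classifying $G$-map into a \emph{finite} join) and where \Cref{th.eb} enters; one should note that the conclusion genuinely fails without compactness, since $\mathrm{coind}_G(EG)=\infty$ and there is a $G$-map $EG\ast G\to EG$. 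The remaining ingredients — functoriality of the join, $G$-equivariance of the associativity homeomorphism $A\ast B\ast C\cong(A\ast B)\ast C$, existence of subordinate partitions of unity for a finite open cover of a normal space, and freeness of the diagonal action on $G^{*(n+1)}$ — are routine and I would simply verify them in passing.
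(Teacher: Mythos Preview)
Your proof is correct and is essentially the paper's own argument (the non-trivial half of \Cref{prop.eq}), repackaged in terms of the co-index rather than the index. The paper sets $n=\mathrm{ind}_G(X)<\infty$ (finite directly from compactness and local triviality, without invoking \Cref{th.eb}) and, assuming a $G$-map $X\ast G\to X$, writes down the chain
\[
  E_{n+1}G\hookrightarrow X\ast G^{*(n+2)}\to\cdots\to X\ast G\ast G\to X\ast G\to X\to E_nG,
\]
contradicting \Cref{th.eb}. Your remark that an $\mathrm{ind}_G$-based argument ``would require a lower bound on $\mathrm{ind}_G(X\ast G)$ that is not formal'' is therefore slightly off: the paper does not try to compare $\mathrm{ind}_G(X\ast G)$ with $\mathrm{ind}_G(X)$, but constructs the forbidden map directly from exactly the same three ingredients you use --- an orbit inclusion $G\to X$, iteration of the assumed $X\ast G\to X$, and the classifying map $X\to E_nG$ coming from a finite trivializing cover. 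The two organizations are interchangeable; yours tucks the iteration and the appeal to \Cref{th.eb} into the finiteness proof for $\mathrm{coind}_G(X)$, while the paper's displays them in the final chain and appeals to \Cref{th.eb} only once at the end.
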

\begin{remark}\label{re.tors}
The case when $G$ has non-trivial elements of finite order is known, e.g., see \cite{vol,saturated}. We also note
that \Cref{th.eb} in the case when $G$ is a finite group can be found in \cite[Theorem 6.2.5]{mt-bu}.
\end{remark}

To prove that \Cref{th.eb} and \Cref{th.bu} are equivalent, we will need the following remark regarding the 
trivializability of the principal $G$-bundle
\begin{equation*}
  E_nG\to E_nG/G=: B_nG. 
\end{equation*}
Note that the classifying space for $G$ is the direct limit of $B_nG$'s.
We slightly abuse notation and denote the coordinates on $E_nG$ by the convex combinations
\begin{equation*}
 t_0g_0+\cdots+t_ng_n,\ g_i\in G,\ t_i\in [0,1],\ \sum t_i=1,
\end{equation*}
and the action of $G$ on it is diagonal. $B_nG$ thus consists of analogous convex combinations, considered up to simultaneous translation of the $g_i$ on the right:
\begin{equation*}
t_0g_0+\cdots+t_ng_n = t_0g_0g+\cdots+t_ng_ng \text{ in } B_nG,\ \forall g_i,g\in G. 
\end{equation*}

\begin{remark}\label{le.bn}
  The bundle $E_nG\to B_nG$ can be trivialized by the cover of $B_nG$ by the $n+1$ open sets $U_i$, $0\le i\le n$, consisting respectively of the classes of combinations
  \begin{equation*}
    t_0g_0+\cdots+t_ng_n
  \end{equation*}
  for which $t_i\ne 0$. See e.g. the proof of \cite[Theorem 4.11.2]{hus}.  
\end{remark}

\begin{proposition}\label{prop.eq}
\Cref{th.eb} and \Cref{th.bu} are equivalent.
\end{proposition}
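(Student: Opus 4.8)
The plan is to prove the two implications separately; the implication \Cref{th.bu}$\,\Rightarrow\,$\Cref{th.eb} is immediate, and the reverse implication is where the work lies.

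For \Cref{th.bu}$\,\Rightarrow\,$\Cref{th.eb} I would simply apply \Cref{th.bu} to the $G$-space $X:=E_nG=G^{*(n+1)}$: it is compact Hausdorff, the diagonal $G$-action on it is free, and by \Cref{le.bn} the quotient map $E_nG\to B_nG$ is a locally trivial principal $G$-bundle. Since the topological join is associative, $X\ast G=G^{*(n+1)}\ast G=G^{*(n+2)}=E_{n+1}G$, so a $G$-equivariant map $X\ast G\to X$ is literally a $G$-equivariant map $E_{n+1}G\to E_nG$; \Cref{th.bu} rules out the former, hence \Cref{th.eb} follows.

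For \Cref{th.eb}$\,\Rightarrow\,$\Cref{th.bu}, let $X\to X/G$ be a locally trivial principal $G$-bundle. We may assume $X\neq\emptyset$, since otherwise $X\ast G=G$ and there is obviously no $G$-map $G\to\emptyset$. Suppose, for contradiction, that $f:X\ast G\to X$ is $G$-equivariant. By the equivalence between finiteness of $\mathrm{ind}_G(X)$ and local triviality of $X\to X/G$ recalled above, we get $\mathrm{ind}_G(X)=:n<\infty$, hence a $G$-map $\phi:X\to E_nG$. Now I would iterate $f$: for each $j$ the map $f\ast\id_{G^{*j}}$ is a $G$-map $(X\ast G)\ast G^{*j}\cong X\ast G^{*(j+1)}\to X\ast G^{*j}$, and composing these for $j=n,n-1,\dots,0$ produces a $G$-map $X\ast G^{*(n+1)}\to X$; post-composing with $\phi$ shows $\mathrm{ind}_G\!\bigl(X\ast G^{*(n+1)}\bigr)\le n$. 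On the other hand, $X$ being a nonempty free $G$-space it contains an orbit $Gx_0$, and since the action is free, $G$ is compact and $X$ is Hausdorff, the orbit map identifies $Gx_0$ with $G=E_0G$ as $G$-spaces; joining this $G$-embedding $E_0G\hookrightarrow X$ with $\id_{G^{*(n+1)}}$ gives a $G$-map $E_{n+1}G=E_0G\ast G^{*(n+1)}\to X\ast G^{*(n+1)}$, so $\mathrm{ind}_G\!\bigl(X\ast G^{*(n+1)}\bigr)\ge\mathrm{ind}_G(E_{n+1}G)$. Finally $\mathrm{ind}_G(E_{n+1}G)=n+1$: this is the corollary to \Cref{th.eb}, which is itself immediate, since the identity gives $\le n+1$ while $\mathrm{ind}_G(E_{n+1}G)\le n$ would yield a $G$-map $E_{n+1}G\to E_nG$. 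Combining the two bounds gives $n+1\le n$, a contradiction, and \Cref{th.bu} follows.

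I do not expect a serious obstacle here: once one has \Cref{th.eb} (equivalently, its corollary $\mathrm{ind}_G(E_mG)=m$) and the cited equivalence, the argument is formal. The points requiring a little care are the standard properties of the join — that it is associative up to $G$-homeomorphism and that $f_1\ast f_2$ is $G$-equivariant whenever $f_1,f_2$ are, for the diagonal actions — together with the one genuine idea: a hypothetical $f:X\ast G\to X$ forces the bound $\mathrm{ind}_G(X\ast G^{*k})\le\mathrm{ind}_G(X)$ \emph{uniformly} in $k$, which collides with the linear-in-$k$ lower bound $\mathrm{ind}_G(X\ast G^{*k})\ge k$ coming from the inclusion of an orbit $G\hookrightarrow X$.
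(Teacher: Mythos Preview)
Your proposal is correct and follows essentially the same approach as the paper: both directions are argued identically in spirit, with the nontrivial implication obtained by iterating $f\ast\id$ to produce a chain of $G$-maps from a high join down to $X$, composing with $\phi:X\to E_nG$, and feeding in $E_{n+1}G$ via an equivariant embedding. The only cosmetic difference is that the paper embeds $E_{n+1}G=G^{*(n+2)}$ directly as the $G^{*(n+2)}$-end of $X\ast G^{*(n+2)}$, whereas you embed it into $X\ast G^{*(n+1)}$ via the orbit map $G\hookrightarrow X$ joined with the identity; this saves one step in the chain but is otherwise the same argument.
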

\begin{proof}
\Cref{le.bn} indicates that $E_nG$ is a locally trivial principal $G$-bundle, hence
\Cref{th.bu} implies \Cref{th.eb}.
Conversly, if $X$ is a locally trivial principal $G$-bundle, then it has finite $G$-index.
Suppose that ${\rm ind}_G(X)=n$ and that there exists a $G$-map $X\ast G\to X$. We have the following chain of $G$-maps
$$
E_{n+1}G\hookrightarrow X\ast \underbrace{G\ast\ldots\ast G}_{n+2}\to\ldots\to X\ast G\ast G\to X\ast G\to X\to E_nG,
$$
which contradicts \Cref{th.eb}.
\end{proof}

Before going into the proof of \Cref{th.eb}, we need some preparation. 
First, recall the following notion of size for a topological space. 

\begin{definition}
The Lusternik-Schnirelmann category of a topological space $X$, denoted $LS(X)$, is the minimal number $n$, for which
there exists an open covering of $X$ consisting of sets $U_i$, $i=0,1,2,\ldots,n$, such that, for every $i$, the embedding
$U_i\hookrightarrow X$ is nullhomotopic.
\end{definition}

The concept enters the present discussion via the following observation (whose routine proof we omit): 

\begin{proposition}\label{pr.lst}
The Lusternik-Schnirelmann category of the $n$-dimensional torus $\mathbb{T}^n$ is $n$. \hfill $\square$
\end{proposition}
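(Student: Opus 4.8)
The plan is to sandwich $LS(\mathbb{T}^n)$ between the bounds $LS(\mathbb{T}^n)\le n$ and $LS(\mathbb{T}^n)\ge n$. For the upper bound I would begin with the base case $LS(S^1)=1$ --- the circle is covered by two open arcs, each of which, being contractible, is nullhomotopic in $S^1$ --- and then invoke the subadditivity of Lusternik--Schnirelmann category under Cartesian products,
\[
  LS(X\times Y)\le LS(X)+LS(Y),
\]
applied inductively to $\mathbb{T}^n=(S^1)^{\times n}$. Establishing this product inequality is the most technical ingredient: starting from categorical open covers $\{U_i\}_{i=0}^{p}$ of $X$ and $\{V_j\}_{j=0}^{q}$ of $Y$, one fixes subordinate partitions of unity and, for each $0\le k\le p+q$, glues the products $U_i\times V_j$ with $i+j=k$ into a single open subset of $X\times Y$ whose constituent pieces are kept apart with the help of the partitions of unity, so that the resulting set is nullhomotopic in $X\times Y$; the $p+q+1$ sets so obtained cover $X\times Y$. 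Alternatively one may simply quote the standard estimate $LS(Z)\le\dim Z$, valid for finite CW complexes $Z$, and apply it to $Z=\mathbb{T}^n$.

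For the lower bound I would run the classical cup-length argument in the following form: if $X=U_0\cup\cdots\cup U_m$ with each inclusion $U_i\hookrightarrow X$ nullhomotopic, then the cup product $\alpha_0\cup\cdots\cup\alpha_m$ of any positive-degree classes $\alpha_i\in H^{\ge 1}(X)$ vanishes. To see this, note that each $U_i\hookrightarrow X$ factors up to homotopy through a point, so $\alpha_i$ restricts to $0$ on $U_i$ and hence, by the long exact sequence of the pair $(X,U_i)$, lifts to a relative class $\widetilde\alpha_i\in H^{\ge 1}(X,U_i)$; the relative cup product of the $\widetilde\alpha_i$ then lands in $H^*(X,\,U_0\cup\cdots\cup U_m)=H^*(X,X)=0$, and $\alpha_0\cup\cdots\cup\alpha_m$ is its image under the natural map $H^*(X,X)\to H^*(X)$. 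Taking $X=\mathbb{T}^n$ with integer coefficients and recalling that $H^*(\mathbb{T}^n;\bZ)$ is the exterior algebra on degree-one generators $x_1,\dots,x_n$, a categorical cover of $\mathbb{T}^n$ by at most $n$ open sets (say $U_0,\dots,U_m$ with $m+1\le n$) would force $x_1\cup\cdots\cup x_{m+1}=0$, contradicting the fact that this product is a nonzero element of the free abelian group $H^{m+1}(\mathbb{T}^n;\bZ)$. Hence every categorical open cover of $\mathbb{T}^n$ has at least $n+1$ members, that is $LS(\mathbb{T}^n)\ge n$, and combining with the upper bound gives $LS(\mathbb{T}^n)=n$.

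The content here is entirely classical; the only real obstacle is careful bookkeeping --- stating the product-subadditivity argument so that the glued open sets genuinely stay nullhomotopic (which is precisely where the partitions of unity enter), and invoking the relative cup product with the correct excisiveness hypotheses for finite unions of open sets. Neither step conceals a genuine difficulty, which is presumably why the paper records the proof as routine.
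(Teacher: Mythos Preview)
Your argument is correct and is precisely the standard one: the cup-length lower bound combined with either product subadditivity or the dimension estimate for the upper bound. There is nothing to compare against, however, because the paper does not actually give a proof of this proposition---it simply declares the result routine and omits the argument (note the ``whose routine proof we omit'' preceding the statement and the bare $\square$). Your write-up would serve as a perfectly adequate filling-in of what the authors left to the reader.
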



One ingredient in the proof of \Cref{th.eb} will be the existence of locally trivial principal $G$-bundles over $\mathbb{T}^n$ that are non-trivial over non-trivial loops. We will thus need to prove the existence of such bundles. By a {\it non-trivial loop} on a topological space $X$ we mean a non-nullhomotopic continuous map $\alpha:[0,1]\to X$ such that $\alpha(0)=\alpha(1)$.

All of our topological spaces will be Hausdorff and locally path connected, to ensure a well-behaved theory of covering spaces (see e.g. \cite[$\S$II.2]{mssy}, where the notion is referred to as local {\it arcwise} or {\it pathwise} connectivity). Let $X$ be a connected space, $G$ a compact group, and $\Gamma$ a discrete group acting on $X$ (from the left) freely and {\it properly discontinuously}, i.e. such that every point has a neighborhood $U$ with
\begin{equation}\label{eq:gu}
  \gamma U\cap U=\emptyset,\ \forall \gamma\in \Gamma, \gamma\ne 1,
\end{equation}
(\cite[p. 136]{mssy}).

We also fix a morphism $\phi:\Gamma\to G$, and regard $\Gamma$ as acting on $G$ on the left by translations via~$\phi$. Note that $\Gamma$ acts properly discontinuously on $X\times G$ as well: for any $U\subset X$ satisfying (\ref{eq:gu}), so does $U\times G$. Moreover, the right action of $G$ on the right-most component of $X\times G$ commutes with the left $\Gamma$-action and hence descends to an action on
\begin{equation*}
  E:= \Gamma \backslash (X\times G). 
\end{equation*}
It is not difficult to check that we have

\begin{lemma}\label{le.loc-triv}
In the setting above, $E$ is a locally trivial principal $G$-bundle over $Y:=\Gamma \backslash X$.   
\end{lemma}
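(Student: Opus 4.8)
The plan is to build explicit local trivializations out of the fact that $q\colon X\to Y=\Gamma\backslash X$ is a covering map. First I would pin down the bundle projection: the projection $\mathrm{pr}_X\colon X\times G\to X$ is $\Gamma$-equivariant (for the left $\Gamma$-actions) and invariant under the right $G$-action on the second factor, so it descends to a continuous, $G$-invariant map $p\colon E\to Y$, which will be the bundle map. I would also note at the outset that $E$ and $Y$ are Hausdorff, being orbit spaces of free properly discontinuous actions of discrete groups on Hausdorff spaces (Hausdorffness of $E$ also follows a posteriori once local triviality over the Hausdorff base $Y$ with fibre $G$ is established).

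Next I would check freeness of the induced right $G$-action on $E$: if $[x,g]\cdot h=[x,g]$ then $(x,gh)=(\gamma x,\phi(\gamma)g)$ for some $\gamma\in\Gamma$, so $\gamma x=x$, hence $\gamma=1$ by freeness of the $\Gamma$-action on $X$, and therefore $h=1$.

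The main step is local triviality. Given $y\in Y$, choose $x\in X$ over $y$ and, using proper discontinuity, an open $U\ni x$ with $\gamma U\cap U=\emptyset$ for all $\gamma\neq 1$. Then $q|_U$ is a homeomorphism onto the open set $V:=q(U)\subseteq Y$ — injective by the disjointness condition, open because orbit-space quotient maps are open, and continuous. I claim the composite $U\times G\hookrightarrow X\times G\xrightarrow{\ \pi\ }E$ restricts to a homeomorphism onto $p^{-1}(V)$. Surjectivity onto $p^{-1}(V)$: a point of $p^{-1}(V)$ is a class $[x',g']$ with $q(x')\in V=q(U)$, so $x'=\gamma u$ for some $u\in U$ and $\gamma\in\Gamma$, whence $[x',g']=[u,\phi(\gamma)^{-1}g']\in\pi(U\times G)$. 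Injectivity: $[u_1,g_1]=[u_2,g_2]$ with $u_i\in U$ forces $u_2=\gamma u_1\in U$, hence $\gamma=1$ and $(u_1,g_1)=(u_2,g_2)$. For the homeomorphism property I would use that $\pi$ is open (being a quotient by a group action): for open $W\subseteq U\times G$, $\pi(W)$ is open in $E$, hence in $p^{-1}(V)$, so the continuous bijection $\pi|_{U\times G}$ is open, thus a homeomorphism. Composing with $(q|_U)^{-1}\times\mathrm{id}_G$ yields a homeomorphism $\Phi_V\colon V\times G\xrightarrow{\ \sim\ }p^{-1}(V)$, and one reads off directly that $p\circ\Phi_V=\mathrm{pr}_V$ and $\Phi_V(v,gh)=\Phi_V(v,g)\cdot h$ — exactly the data of a principal $G$-bundle chart.

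I do not expect a serious obstacle: the only genuinely nontrivial input is the standard fact that a free properly discontinuous action of a discrete group on our class of spaces gives a covering map, which is what lets $q|_U$ be a homeomorphism onto an open set; everything else is diagram-chasing. The one point deserving a moment's care is the openness of the restricted quotient map $\pi|_{U\times G}$, which I would dispatch via openness of $\pi$ itself rather than by attempting to replace $U\times G$ by a saturated neighbourhood.
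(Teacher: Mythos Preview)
Your argument is correct and follows the same route as the paper: pick a properly discontinuous neighbourhood $U$ of a preimage of $y$, set $V=q(U)$, and identify $p^{-1}(V)$ with $V\times G$ via the evident map. The paper's proof is a one-line sketch of exactly this, whereas you have carefully filled in the surjectivity, injectivity, openness, and equivariance verifications that the paper leaves implicit.
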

\begin{proof}
  Let $y\in Y$, choose a point $x\in \pi^{-1}(y)$ where
  \begin{equation*}
    \pi:X\to \Gamma\backslash X = Y
  \end{equation*}
  is the projection, and let $x\in U\subset X$ be an open subset satisfying (\ref{eq:gu}). Then, $V=\pi(U)$ is an open neighborhood of $y$ and condition (\ref{eq:gu}) ensures that the restriction of $E\to Y$ to $E|_{V}\to V$ is isomorphic to $V\times G\to V$.
\end{proof}

Now cover $Y$ with open sets $V_i$ of the form $\pi(U)$ for open $U\subset X$ satisfying (\ref{eq:gu}). $\pi:X\to Y$ is a covering map and the opens $V_i$ are chosen so that the preimage $\pi^{-1}(V_i)$ is the disjoint union of open subsets $U_{i\ell}\subset X$ mapped onto $V_i$ homeomorphically by $\pi$. Identify each restriction $E|_{V_i}$ with the trivial principal $G$-bundle $U_{i\ell}\times G$ via $\pi$ for one of the $U_{i\ell}$'s:
\begin{equation*}
  h_i : U_{i\ell}\times G\cong V_i\times G\to E|_{V_i}. 
\end{equation*}
This is an atlas for the bundle in the sense of \cite[Chapter 5, Definition 2.1]{hus}, and the choice of the atlas makes it clear that the resulting transition functions $g_{ij}:V_i\cap V_j\to G$ defined by
\begin{equation*}
  (h_i^{-1}\circ h_j)(y,g) = (y, g_{ij}g)
\end{equation*}
actually factor through $\phi:\Gamma\to G$. In other words, the structure group of the fiber bundle $E\to Y$ can be reduced from $G$ to the discrete group $\Gamma$ via $\phi$. The bundle thus admits a flat connection, and the corresponding monodromy morphism
\begin{equation*}
  \pi_1(Y)\to \Gamma\to G 
\end{equation*}
is nothing but the composition of $\phi:\Gamma\to G$ with the canonical morphism $\pi_1(Y)\to \Gamma$ resulting from the quotient map
\begin{equation*}
  \pi:X\to \Gamma\backslash X\cong Y. 
\end{equation*}

Before continuing, we need more preparation. Following \cite[Chapter 8, Part 2]{hm} we denote by $G_a$ the normal subgroup of the compact group $G$ consisting of elements that are path-connected to the identity. Note that in general $G_a$ is not closed, so we will disregard the quotient topology on
\begin{equation*}
  \pi_0(G):=G/G_a,
\end{equation*}
considering it simply as an abstract group.

\begin{definition}\label{def.incmprs}
  A principal $G$-bundle over $Y$ is {\it incompressible} if its restriction to every non-nullhomotopic loop is non-trivial. 
\end{definition}

\begin{lemma}\label{le.sc}
  In the setting above, suppose $X$ is simply connected and $\phi:\Gamma\to G\to \pi_0(G)$ is a group monomorphism.
  Then, the bundle $E\to Y$ constructed above is incompressible. 
\end{lemma}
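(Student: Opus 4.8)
The plan is to show that a loop $\alpha$ in $Y$ which is non-nullhomotopic forces the restricted bundle $E|_\alpha$ to be non-trivial, by analyzing the monodromy. A principal $G$-bundle over the circle $S^1$ is classified, up to isomorphism, by the image of the monodromy element in $\pi_0(G)$: two such bundles are isomorphic precisely when their monodromies lie in the same path-component of $G$, i.e.\ agree in $\pi_0(G)$, and the bundle is trivial exactly when the monodromy lands in $G_a$. So the first step is to recall (or prove in a line) this classification of principal $G$-bundles over $S^1$ via $\pi_0(G)$; this uses that $S^1$ is built from an interval by gluing its endpoints, so a bundle over it is a ``clutching'' datum given by a single element $g\in G$, and an isomorphism of clutching data amounts to conjugating and multiplying by the values of a path in $G$, which is exactly the statement that the class in $\pi_0(G)$ (which is the quotient by $G_a$, a normal subgroup, so conjugation acts trivially on it) is the complete invariant.

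Next I would pull this back along $\alpha$. By the discussion preceding the lemma, the bundle $E\to Y$ carries a flat connection whose monodromy morphism is the composite
\begin{equation*}
  \pi_1(Y)\xrightarrow{\ \cong\ }\Gamma\xrightarrow{\ \phi\ }G\longrightarrow\pi_0(G),
\end{equation*}
where the first arrow is the canonical isomorphism coming from $X\to\Gamma\backslash X\cong Y$ (an isomorphism precisely because $X$ is simply connected, and here $X$ is assumed simply connected, so $\pi_1(Y)\cong\Gamma$). Since $\phi:\Gamma\to\pi_0(G)$ is assumed to be a monomorphism, the composite $\pi_1(Y)\to\pi_0(G)$ is injective. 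Hence if $\alpha$ is a non-nullhomotopic loop in $Y$, its class $[\alpha]\in\pi_1(Y)$ is non-trivial, its image in $\Gamma$ is non-trivial, and therefore its image in $\pi_0(G)$ is non-trivial; equivalently, the monodromy of $\alpha$ does not lie in $G_a$. By the classification from the first step, $E|_\alpha$ is then a non-trivial principal $G$-bundle over $S^1$. Since this holds for every non-nullhomotopic loop $\alpha$, the bundle $E\to Y$ is incompressible by \Cref{def.incmprs}.

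The one genuinely delicate point is the reduction from the restriction ``to every non-nullhomotopic loop $\alpha:[0,1]\to Y$'' in the definition of incompressibility to a statement purely about $\pi_1(Y)$: a priori one must make sense of the pullback bundle $\alpha^*E$ over the circle and of its triviality, and check that this depends only on the homotopy class of $\alpha$. This is standard --- homotopic maps pull back isomorphic bundles, and for a loop the relevant homotopy class is an element of $\pi_1(Y)$ --- but it is where one must be careful that $\alpha$ need not be a covering-space loop or otherwise nice, so one genuinely invokes the flat-connection/monodromy description established above rather than anything about the covering $X\to Y$ directly. The rest is the clutching-function bookkeeping over $S^1$, which is routine. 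I do not expect either step to present a real obstacle; the flat structure on $E$ has already done the essential work in the lines preceding the lemma.
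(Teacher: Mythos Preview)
Your approach is correct and essentially the same as the paper's: both compute the monodromy of the flat bundle $E\to Y$, use simple connectedness of $X$ to identify $\pi_1(Y)\cong\Gamma$, and then invoke the hypothesis that $\Gamma\to\pi_0(G)$ is injective to conclude that the monodromy along any non-trivial loop lies outside $G_a$, hence the restricted bundle over $S^1$ is non-trivial. The only difference is that the paper cites a reference (Kamber--Tondeur) for the fact that the homotopy class of the monodromy, modulo inner automorphisms of $G$, is a bundle invariant, whereas you propose to verify the classification of $G$-bundles over $S^1$ directly via clutching.

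One small slip: your parenthetical remark that ``conjugation acts trivially on $\pi_0(G)$'' is not correct in general---$G_a$ being normal only says that conjugation by $G$ descends to $\pi_0(G)$, not that it is trivial there (that would require $\pi_0(G)$ abelian). So the complete invariant for principal $G$-bundles over $S^1$ is the set of conjugacy classes in $\pi_0(G)$, not $\pi_0(G)$ itself. This does not affect your conclusion, however: the identity element is its own conjugacy class, so a monodromy which is non-trivial in $\pi_0(G)$ remains non-trivial after passing to conjugacy classes, and the pulled-back bundle is therefore non-trivial. That is all you need for incompressibility.
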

\begin{proof}
  By \cite[Theorem~1.1~(ii)]{kambton-flat-bun}, the equivalence class under inner automorphisms of $G$ of the homotopy class of the monodromy map is an invariant of bundles. In particular, for trivial bundles the map is homotopic to the trivial morphism.
  
On the other hand, under the hypotheses of the lemma the monodromy
  \begin{equation*}
    m:\pi_1(Y)\cong \Gamma\to G
  \end{equation*}
  cannot be homotoped to $1\in G$ on any non-trivial element $s$ of $\pi_1(Y)$ because $m(s)$ belongs to $G/ G_a$. In conclusion, the above paragraph implies that the restriction of $E$ to the circle via a map
  \begin{equation*}
    \bS^1\to Y
  \end{equation*}
  sending the generator of $\pi_1(\bS^1)$ to $s\ne 1$ in $\pi_1(Y)$ is non-trivial. 
\end{proof}

\begin{proposition}\label{le.emb}
  If $G$ is a non-trivial torsion-free compact group then, for every positive integer~$n$, $G$ contains a copy of $\mathbb{Z}^n$ which embeds into $\pi_0(G)$ via
  \begin{equation*}
    \bZ^n\to G\to \pi_0(G). 
  \end{equation*}
\end{proposition}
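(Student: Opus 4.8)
The plan is to treat separately two extreme cases, according to whether or not the identity component $G_0$ is trivial, having noted first that a non-trivial torsion-free group is automatically infinite.

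Suppose first that $G_0=\{1\}$, so that $G$ is profinite. Then its arc component $G_a$ is trivial as well, $\pi_0(G)=G$, and it is enough to embed $\bZ^n$ into $G$ itself. Choosing any $x\ne 1$ in $G$, its order is infinite, so the closed subgroup $\overline{\langle x\rangle}$ is an infinite procyclic group, hence a quotient of $\widehat{\bZ}$; being torsion-free it must then be isomorphic to $\prod_{p\in S}\bZ_p$ for some non-empty set $S$ of primes, so that $G$ contains a closed copy of $\bZ_p$ for some prime $p$. As an abstract abelian group $\bZ_p$ has infinite torsion-free rank (its divisible hull $\bQ_p$ is an infinite-dimensional $\bQ$-vector space), hence contains a free abelian subgroup of rank $n$, which settles this case.

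Now suppose $G_0\ne\{1\}$. Being a torsion-free compact \emph{connected} group, $G_0$ is abelian: otherwise $\overline{[G_0,G_0]}$ would be a non-trivial semisimple compact connected group (by the structure theory of \cite{hm}), whose simple factors would supply a compact connected Lie subgroup of positive dimension, hence a circle, hence $2$-torsion in $G_0$. A compact, connected and torsion-free group has Pontryagin dual a torsion-free \emph{divisible} abelian group, i.e.\ a non-zero $\bQ$-vector space; splitting off a one-dimensional subspace exhibits $\bQ$ as a retract of $\widehat{G_0}$, and dually the solenoid $\Sigma:=\widehat{\bQ}$ as a retract of $G_0$, say via $\Sigma\xrightarrow{\iota}G_0\xrightarrow{r}\Sigma$ with $r\iota=\id$. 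Since continuous homomorphisms carry arc components into arc components, $\iota$ and $r$ descend to homomorphisms between $\pi_0(\Sigma)$ and $\pi_0(G_0)$ composing to the identity; together with the inclusion $\pi_0(G_0)=G_0/G_a\hookrightarrow G/G_a=\pi_0(G)$ (valid because $G_a\subseteq G_0$), this reduces the theorem to finding a copy of $\bZ^n$ inside $\Sigma$ meeting its arc component $\Sigma_a$ trivially.

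For that last step I would use the standard presentation $\Sigma\cong(\bR\times\widehat{\bZ})/\bZ$ with $\bZ$ embedded diagonally. The diagonal $\bZ$-action on $\bR\times\widehat{\bZ}$ is free and properly discontinuous, so, as in the covering-space discussion around \Cref{le.loc-triv}, the quotient map $q\colon\bR\times\widehat{\bZ}\to\Sigma$ is a covering projection; since $\widehat{\bZ}$ is totally disconnected this forces $\Sigma_a=q(\bR\times\{0\})$ and shows that $\Sigma\to\Sigma/\Sigma_a$ is induced by the projection $\bR\times\widehat{\bZ}\to\widehat{\bZ}\to\widehat{\bZ}/\bZ$. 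Now fix a prime $p$: inside $\widehat{\bZ}=\prod_q\bZ_q$ the factor $\bZ_p$ meets the diagonal copy of $\bZ$ only in $0$, so the composite $\bZ_p\hookrightarrow\{0\}\times\widehat{\bZ}\xrightarrow{q}\Sigma$ is injective and remains injective after passing to $\Sigma/\Sigma_a$, and a rank-$n$ free abelian subgroup of $\bZ_p$, chosen as in the profinite case, yields the required copy of $\bZ^n$. The genuinely delicate points are the two structural inputs above — that a torsion-free compact connected group is abelian, and the identification of the arc component (equivalently $\pi_0$) of the solenoid — everything else being routine bookkeeping with products, retracts and quotients; I expect the first of these, checked carefully against \cite{hm}, to be the main obstacle.
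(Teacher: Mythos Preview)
Your argument is correct and follows the same overall architecture as the paper's proof: handle the totally disconnected case directly (where $\pi_0(G)=G$ and one only needs $\bZ^n\hookrightarrow\bZ_p$), then for $G_0\ne\{1\}$ argue that a torsion-free compact connected group is abelian and reduce to the solenoid $\widehat{\bQ}$. The paper carries out exactly this reduction, invoking \cite[Theorem~9.24]{hm} for the structure of compact connected groups just as you anticipate.

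The genuine difference lies in the treatment of $\pi_0(\widehat{\bQ})$. The paper quotes the identification $\pi_0(\widehat{\bQ})\cong\mathrm{Ext}(\bQ,\bZ)$ from \cite[Theorem~8.30]{hm} together with $\mathrm{Ext}(\bQ,\bZ)\cong\bR$ from \cite{wie-ext}, concludes that $\pi_0(\widehat{\bQ})$ contains $\bZ^n$, and then \emph{lifts} that copy back into $G$ using projectivity of $\bZ^n$ in abelian groups. You instead work forward: using the adelic presentation $\widehat{\bQ}\cong(\bR\times\widehat{\bZ})/\bZ$ you identify the arc component as the image of $\bR$ via path-lifting, and then plant $\bZ^n$ inside a $\bZ_p$-factor of $\widehat{\bZ}$, checking by hand that it meets the diagonal $\bZ$ (hence $\Sigma_a$) trivially. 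Your route is more self-contained --- it avoids both the $\mathrm{Ext}$ identification and the projectivity trick --- at the cost of a short explicit computation; the paper's route is terser once the two citations are granted. One small caveat: the covering-space discussion around \Cref{le.loc-triv} in the paper assumes local path-connectedness, which $\bR\times\widehat{\bZ}$ lacks; your use of it is only for path-lifting along the covering map $q$, and that holds without any such hypothesis, so the appeal is harmless but the cross-reference is slightly misleading.
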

\begin{proof}
  We do this in stages.

  {\bf Step 1: $G$ itself contains $\bZ^n$.} The torsion freeness of $G$ ensures that we have an embedding of the additive group $\mathbb{Z}_p$ of $p$-adic integers into $G$; it thus suffices to argue that $\mathbb{Z}^n$ embeds into $\mathbb{Z}_p$ for every $n$. This, however, is clear: the quotient field
  \begin{equation*}
    \mathbb{Q}_p = \mathbb{Z}_p\left[\frac 1p\right]
  \end{equation*}
  of $\mathbb{Z}_p$ is a field of characteristic zero and continuum cardinality, and hence is a vector space of dimension $2^{\aleph_0}$ over $\mathbb{Q}$. If the $p$-adic rationals $v_1$ up to $v_n$ are linearly independent over $\mathbb{Q}$, then for sufficiently large $k$ the elements $p^kv_i$ belong to $\mathbb{Z}_p$ and generate a group isomorphic to $\mathbb{Z}^n$.

  {\bf Step 2: $G=\widehat{\bQ}$.} Since $G$ is abelian and $\bZ^n$ is projective in the category of abelian groups, any embedding $\bZ^n\to \pi_0(G)$ lifts to an embedding $\bZ^n\to G$ as in the statement. In conclusion, it suffices to argue that $\pi_0(G)$ contains a copy of $\bZ^n$.
  
  By \cite[Theorem~8.30~(iii)]{hm}, we have the following isomorphism
$$
\pi_0(\widehat{\bQ})\cong {\rm Ext}(\bQ,\bZ).
$$
We know that ${\rm Ext}(\bQ,\bZ)\cong \bR$ from \cite{wie-ext} and, as $\bR$ is a $\bQ$-vector space of dimension \
$2^{\aleph_0}$, using a similar argument as in Step 1, the result follows.
  
{\bf Step 3: the full result.} As seen in Step 1, we are done if $G$ is totally disconnected. If it is not, then the connected component $G_0$ of the identity is a non-trivial connected torsion-free compact group. Restricting our attention to it, we can assume $G$ itself is connected to begin with.

By \cite[Theorem~9.24 (ii)]{hm}, being compact and connected, $G$ is isomorphic to the quotient group $(Z_0(G)\times \Pi_j S_j)/D$, 
where $Z_0(G)$ is the connected component of the center of $G$, all the $S_j$'s are simple simply connected Lie groups
and $D$ is a totally disconnected central subgroup of the domain of the surjection 
\begin{equation}\label{eq:zsg}
  Z_0(G)\times \prod S_j\to G.
\end{equation}
Moreover, \Cref{eq:zsg} identifies the $Z_0(G)$ factor with its copy inside $G$. 

Since torsion elements are dense in compact connected Lie groups and $G$ is assumed torsion-free, the factors $S_j$ all map to 
$G$ trivially. It follows that in fact \Cref{eq:zsg} factors through $Z_0(G)\to G$. In other words, $G$ must be abelian, and we can 
work with its discrete Pontryagin dual $\widehat{G}$. 

The connectedness of $G$ now implies that $\widehat{G}$ is torsion-free, while the fact that $G$ itself is torsion-free means that $
\widehat{G}$ is divisible. The only divisible torsion-free abelian groups are the vector spaces over $\bQ$, i.e. direct sums $
\bQ^{\oplus S}$ for some set $S$. This in turn implies that
\begin{equation*}
  G\cong \widehat{\bQ}^S.
\end{equation*}
In particular $G$ is abelian, and the same argument as in Step 2 reduces the problem to showing that $\bZ^n$ embeds in $
\pi_0(G)$. This, however, follows from Step 2, since $\pi_0(\widehat{\bQ})$ embeds into $\pi_0(G)$ via one of the factor 
embeddings $\widehat{\bQ}\subset G$. 
\end{proof}

\begin{corollary}\label{cor.torus}
  If $G\ne \{1\}$ is torsion-free and $n$ is a positive integer, then there are incompressible $G$-bundles on the $n$-dimensional torus $\mathbb{T}^n$. 
\end{corollary}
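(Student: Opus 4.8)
The plan is to feed the standard universal cover of the torus into the construction preceding \Cref{le.loc-triv} and then apply \Cref{le.sc}. Concretely, take $X=\bR^n$ with the translation action of $\Gamma=\bZ^n$: this action is free and properly discontinuous (balls of radius $<1/2$ satisfy \eqref{eq:gu}), $X$ is Hausdorff, locally path connected and simply connected, and the quotient $Y=\Gamma\backslash X$ is the $n$-torus $\bT^n$, with $\pi_1(\bT^n)$ identified with $\Gamma=\bZ^n$ in the usual way.

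By \Cref{le.emb}, since $G\neq\{1\}$ is compact and torsion-free, there is a homomorphism $\phi\colon\bZ^n\to G$ for which the composite $\bZ^n\xrightarrow{\ \phi\ }G\to\pi_0(G)$ is injective. Plugging this $\phi$ into the construction above \Cref{le.loc-triv} produces a locally trivial principal $G$-bundle $E=\Gamma\backslash(\bR^n\times G)\to\bT^n$. Now \Cref{le.sc} applies verbatim --- $X=\bR^n$ is simply connected and $\phi\colon\Gamma\to G\to\pi_0(G)$ is a monomorphism by construction --- so $E\to\bT^n$ is incompressible in the sense of \Cref{def.incmprs}, which is precisely the assertion.

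I do not expect a genuine obstacle: every substantive ingredient has already been isolated, with \Cref{le.emb} handling the group theory of embedding $\bZ^n$ compatibly into $G$ and $\pi_0(G)$, and \Cref{le.loc-triv,le.sc} handling the bundle theory. The only care needed is bookkeeping: verifying that $(\bR^n,\bZ^n)$ meets the standing hypotheses of those lemmas and that the $\pi_1$-identification invoked is the tautological one for the covering $\bR^n\to\bT^n$.
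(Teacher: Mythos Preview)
Your proof is correct and follows exactly the paper's approach: the paper's proof is the one-liner ``This follows from \Cref{le.sc} applied to $\Gamma=\bZ^n\cong \pi_1(\bT^n)$,'' and you have simply unpacked this by making explicit the roles of $X=\bR^n$, the invocation of \Cref{le.emb} to produce $\phi$, and the bundle construction via \Cref{le.loc-triv}.
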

\begin{proof}
This follows from  \Cref{le.sc} applied to $\Gamma=\bZ^n\cong \pi_1(\bT^n)$.
\end{proof}


We are now ready for the

\begin{proof}[of \Cref{th.eb}]
  As noted in \Cref{re.tors}, the case when $G$ has torsion is settled in a stronger form. For this reason, throughout the proof we assume that $G$ is torsion-free; then, \Cref{le.emb} and \Cref{cor.torus} apply.
  
  Let $E\to \mathbb{T}^n$ be a bundle as in \Cref{cor.torus}. The space $E_nG$ has vanishing homotopy groups $\pi_i$, $1\le i<n$, and hence the bundle $E_nG\to B_nG$ is {\it $n$-universal} in the sense of \cite[$\S$19.2]{stnrd}: for any locally trivial 
principal $G$-bundle $X\to Y$ with ${\rm dim}(Y)\leq n$ there exists a map of $G$-bundles
\begin{equation*}
\label{univ}
\xymatrix{ 
X \ar[r] \ar[d]
& 
E_n G  \ar[d]\\
Y \ar[r] &B_n G
}
\end{equation*}
Now suppose there exists a $G$-map $E_nG \to E_{n-1}G$ and consider the following commutative diagram. 
\begin{equation*}
\label{diag}
\xymatrix{ 
E \ar[r] \ar[d]
& 
E_n G  \ar[d] \ar[r]
& E_{n-1} G  \ar[d]\\
\mathbb{T}^n \ar[r] &B_n G \ar[r] & B_{n-1} G
}
\end{equation*}

Recall that by construction, the bundle $E\to \mathbb{T}^n$ is nontrivial over every nontrivial loop. We know from Lemma \ref{le.bn} that $B_{n-1}G$ can be covered with $n$ open trivializing sets. Pulling those sets back to $\mathbb{T}^n$, we obtain a open trivializing cover consisting of sets $U_i$, $i=0,1,2,\ldots,n-1$. No $U_i$ can contain a nontrivial loop, and hence the maps $U_{i}\to \mathbb{T}^n$ are nullhomotopic: 

Indeed, the non-existence of non-trivial loops in the connected components $U_{i\ell}$ of $U_{i}$ ensures that the image of $\pi_1(U_{i\ell})\to \pi_1(\mathbb{T}^n)$ is trivial. This then implies that the map $U_{i\ell}\to \mathbb{T}^n$ factors through the contractible universal cover $\mathbb{R}^n\to \mathbb{T}^n$. The conclusion follows. 

We now have a contradiction: trivializing $E\to \mathbb{T}^n$ with $n$ open sets $U_i$ whose embeddings into $\mathbb{T}^n$ are nullhomotopic contradicts \Cref{pr.lst}. 
\end{proof}
\begin{remark}
We would like to remark that the core idea of the proof of \Cref{th.eb} is due to M.~Bestvina and R.~Edwards,
while \Cref{le.sc} and \Cref{le.emb} contain new results that enabled us to generalize the theorem to arbitrary compact Hausdorff 
groups.
\end{remark}

\section{Relation to the Hilbert-Smith conjecture}\label{se.main}

\subsection{Preliminaries}\label{se.prel}

From now on, all of our topological spaces are at the very least second countable and metrizable (or equivalently, separable and metrizable). We record this formally:

\begin{convention}\label{cv.cv}
  Henceforth, unless specified otherwise, we only consider separable metrizable topological spaces. 
\end{convention}

{\it Compacta} are compact Hausdorff metrizable topological spaces. We denote by $\mu^n$ the universal $n$-dimensional Menger compactum characterized abstractly in a number of ways in \cite{best-char}.

Since we have to work extensively with quotients of spaces by group actions, it will be necessary for said quotients to be metrizable (and separable) as well. Since all of our actions are proper and all of our topological groups are locally compact and separable (because they act freely on locally compact separable metric spaces) \cite[Theorem B and Corollary 1]{ant-ney} confirm that indeed $X/G$ is metrizable and separable; we take this for granted repeatedly below.

We use \cite{hw,eng} as our sources of background for the dimension theory of separable metrizable spaces. We will not recall any of the precise definitions of dimension here (henceforth denoted by $\dim$), and refer instead e.g. to \cite[Definition III 1]{hw} or \cite[Definitions 1.1.1, 1.6.1, 1.6.7]{eng} for the various notions of dimension (small or large inductive as well as covering dimension) and \cite[Theorem 1.7.7]{eng} for verification that they all coincide for separable metric spaces.

Suffice it to say the concept captures the usual intuition, and specializes to the standard notion of dimension for manifolds.  The universality of $\mu^n$ referred to above consists in the fact that it is $n$-dimensional, and contains a homeomorphic copy of every $n$-dimensional compactum.

Following \cite[Definition 1]{ag-hs}, we recall

\begin{definition}\label{de.univ}
  Let $n$ be a non-negative integer and $G$ a topological group acting freely on a space $X$. Then, $X$ equipped with the action in question is {\it free $n$-universal} (or simply {\it universal} for short if $n$ is understood) if for any free $G$-space $Y$ with
  \begin{equation*}
    \dim(Y/G)\le n
  \end{equation*}
  and any closed $G$-invariant subspace $Z\subseteq Y$ every equivariant map $Z\to X$ admits an extension to an equivariant map $Y\to X$.
\end{definition}

In \Cref{se.main} below we will make crucial use of the following result (see \cite[Theorem 1]{ag-hs}).

\begin{theorem}\label{th.mug-univ}
  A Menger compactum $\mu^n$ equipped with a free action by a zero-dimensional compact group is free $n$-universal in the sense of \Cref{de.univ}. \qedhere
\end{theorem}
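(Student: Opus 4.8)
The plan is to reformulate the statement as an equivariant extension property of $\mu^n$ and then run an equivariant version of the machinery behind Bestvina's characterization of Menger compacta. Concretely, what must be shown is that $\mu^n$, with the given free action, is an \emph{equivariant absolute extensor in dimension $n$} for free actions of zero-dimensional compact groups: given a free $G$-space $Y$ with $\dim(Y/G)\le n$, a closed invariant $Z\subseteq Y$, and an equivariant $f\colon Z\to\mu^n$, one must extend $f$ to an equivariant map $Y\to\mu^n$. Two preliminary observations cost nothing. First, the orbit map $Y\to Y/G$ is closed (because $G$ is compact) with zero-dimensional fibres, so it does not raise dimension (Hurewicz), giving $\dim Y\le\dim(Y/G)\le n$. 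Second, forgetting the action, $\mu^n$ is an absolute extensor in dimension $n$: it is an $n$-dimensional compactum which is $C^{n-1}$ and $LC^{n-1}$ (\cite{best-char}), and the standard skeleton-by-skeleton argument extends maps from $\le n$-dimensional domains into such a space. Thus the whole issue is to perform this extension equivariantly.

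For the equivariant extension I would proceed as follows. The open invariant set $Y\setminus Z$ is a free $G$-space of dimension $\le n$; using crucially that $G$ is zero-dimensional (and compact) and that $Y/G$ is at most $n$-dimensional, resolve $Y\setminus Z$ by an inverse sequence of free $G$-complexes with cells of the form $G\times\Delta^k$, $k\le n$, equipped with equivariant maps from $Y\setminus Z$ into these approximating complexes whose mesh tends to zero. Now extend $f$ across such a complex cell by cell, in order of increasing dimension: an equivariant map out of a free cell $G\times\Delta^k$ is determined by its restriction $\Delta^k\to\mu^n$, and since $\mu^n$ is $(n-1)$-connected, a prescribed equivariant map on the boundary $G\times\partial\Delta^k$ extends over $G\times\Delta^k$ for every $k\le n$ (extend the corresponding $\partial\Delta^k\to\mu^n$ over $\Delta^k$, then spread by $G$); carrying this out on one representative per $G$-orbit of cells keeps the extension equivariant. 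Finally pass to the limit of the resolution, gluing with $f$ along $Z$; convergence of the successive approximations to a genuine continuous equivariant map is to be controlled using $LC^{n-1}$ together with equivariant general position coming from the disjoint-$n$-cells property of $\mu^n$.

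The main obstacle is exactly the equivariant general-position step that the last sentence compresses. One has to establish equivariant refinements of the defining properties of $\mu^n$ — an equivariant form of the disjoint-$n$-cells property and of local $(n-1)$-connectedness, valid near free orbits rather than near points — so that each cellwise correction can be made $G$-consistently while still kept small enough for the connectivity arguments to bite, and so that the sequence of approximations is Cauchy in an equivariant sense. Obtaining the equivariant resolution of $Y\setminus Z$ adapted to the (possibly far from locally trivial) orbit structure, and verifying that the limiting map is continuous and still extends $f$, are the places where the argument needs genuine work. Once this equivariant version of the Bestvina machinery is in hand, the skeletal induction is a routine equivariant elaboration of the classical extension theorem for $\mathrm{AE}(n)$'s.
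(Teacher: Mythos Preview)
The paper does not prove this theorem: it is quoted from Ageev \cite[Theorem~1]{ag-hs} (note the citation in the sentence preceding the statement and the end-of-proof mark immediately following it). There is thus no in-paper argument to compare your proposal against; what follows assesses your sketch on its own terms and against what Ageev actually does.

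Your outline has the right architecture and several correct ingredients: the Hurewicz bound $\dim Y\le\dim(Y/G)\le n$ is fine; reducing an equivariant extension over a free cell $G\times\Delta^k$ to a non-equivariant extension $\Delta^k\to\mu^n$ via $(n-1)$-connectivity is the right move; and with a $G$-invariant metric on $\mu^n$, ``spreading by $G$'' preserves closeness, so the non-equivariant $LC^{n-1}$ property does feed the equivariant estimates you need. However, the two steps you explicitly flag as ``needing genuine work'' are not loose ends but essentially the entire content of the theorem. First, you assert the existence of an equivariant resolution of $(Y,Z)$ by free $G$-polyhedra of dimension $\le n$ with mesh tending to zero relative to $Z$; for zero-dimensional compact $G$ such an equivariant Freudenthal-type expansion is obtainable, but it is itself a result requiring proof, and you give no indication how (note also that \Cref{de.univ} does not assume $Y$ compact, which your inverse-sequence picture tacitly does). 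Second, your limiting step requires the extensions over successive stages of the resolution to be \emph{coherent}---close to one another after composition with the bonding maps---and to glue continuously with $f$ along $Z$; arranging this is precisely where one must establish that an arbitrary free action of a zero-dimensional compact group on $\mu^n$ enjoys equivariant analogues of Bestvina's $UV^{n-1}$ and disjoint-$n$-disks properties. Proving those equivariant properties is the heart of \cite{ag-hs} (elaborated in \cite{ag-charfree}). As written, your proposal is a correct identification of what has to be done rather than a proof.
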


Finally, we will on occasion refer to {\it ANR} spaces (for absolute neighborhood retract). These are spaces $X$ with the property that whenever embedded homeomorphically $\iota:X\to Y$ as a closed subspace of a topological space $Y$, there is an open neighborhood $U$ of $X$ in $Y$ that retracts onto $X$ in the sense that we have a diagram of the form
\begin{equation*}
  \begin{tikzpicture}[auto,baseline=(current  bounding  box.center)]
    \path[anchor=base] (0,0) node (x) {$X$} +(4,0) node (y) {$Y$} +(2,-.5) node (u) {$U$};
         \draw[right hook->] (x) to[bend left=16] node[pos=.5,auto] {$\scriptstyle \iota$} (y);
         \draw[->] (x) to [bend left=10] (u);
         \draw[right hook->] (u) to [bend right=6] (y);
         \draw[->] (u) to [bend left=10] node[pos=.5,auto] {$\scriptstyle r$} (x);
  \end{tikzpicture}
\end{equation*}
for some $r:U\to X$ so that the $X$-based loop is the identity.  Metrizable manifolds are automatically ANR, e.g. by \cite[Theorem V.7.1]{hu} or the main result of \cite{caut-anr}.

\subsection{The weak Hilbert-Smith conjecture}
In \cite[Conjecture 1.2]{dran-surv} 
a version of the Hilbert-Smith conjecture was presented in its weak form, whereby free actions are substituted for effective ones, 
and an additional constraint is imposed on the orbit space. That is, one assumes that, for a free action of a
compact group $G$ on a finite-dimensional topological manifold $M$, the orbit space $M/G$ is finite-dimensional. At first sight
this is counterintuitive since it disagrees with the well-known equation
$$
\dim(M/G)=\dim(M)-\dim(G),
$$
which holds for example for free actions of compact Lie groups.
However, in the case when $G=\mathbb{Z}_p$, the situation changes drastically. Although $p$-adic integers
are zero-dimensional, Smith showed that the dimension of $M/\mathbb{Z}_p$ is not equal to $\dim(M)$~\cite{sm-per}.
Next, combining the result of C.T.~Yang~\cite{y-p} and Alexandroff's theorem about the coincidence of
the cohomological and covering dimension  \cite{al-dim}, one can deduce that for a free action of $\mathbb{Z}_p$
on an $n$-manifold we have that either $\dim(M/\mathbb{Z}_p)=n+2$ or it is infinite. Therefore,
proving the Hilbert-Smith conjecture for finite-dimensional orbit spaces would exclude the first possibility.

Along the lines of \cite{tao-hilb}, we somewhat strengthen the statement of \cite[Conjecture 1.2]{dran-surv} by allowing our 
groups to be {\it locally} compact, so long as the action is well-behaved enough for the resulting orbit spaces to be locally compact 
Hausdorff spaces. Namely, we assume the actions to be {\em proper}. Let us list four equivalent definitions of when an action of a 
locally compact group $G$ on a locally compact Hausdorff space $X$ is proper:
\begin{itemize}
\item the map $G\times X\to X\times X,\; (g,x)\mapsto (xg,x)$ is a proper map,  i.e. preimages of compact sets are compact,
\item for any pair of points $x, y\in X$ there exist open neighborhoods $U$ of $x$ and $V$ of $y$ such that the set $$U.V:= \{g\in G\,|\, Ug\cap V\neq\emptyset \}$$ is relatively compact, i.e. its closure is compact,
\item if $K$ is a compact subset of $X$, then the set $K.K$ is compact.
\item if $K$ and $K'$ are compact subsets of $X$ then $K.K'$ is compact.
\end{itemize}
The equivalence of the above definitions follows from \cite[Proposition~7 p.~104 and Proposition~7 p.~255]{bourbaki-proper} and \cite[Theorem~1.2.9]{palais}.

\begin{remark}\label{re.prop}
Note that properness, like freeness, is a hereditary property, i.e. it descends to open or closed subgroups.
Indeed, let $G$ be a locally compact Hausdorff topological group acting properly on a locally compact Hausdorff space $X$.
This means that for any compact subset $K$ of $X$, the set $K.K$ is compact in $G$.
Recall that an open subgroup of a topological group is automatically closed, so we only
need to prove the claim for closed subgroups. Suppose that $H$ is a closed subgroup of $G$
and take any compact subset $K$ in $X$. We want to prove that the set
$$
\{h\in H:Kh\cap K\neq\emptyset\}=K.K\cap H
$$
is compact. This is clear as the above set is a closed subset of $K.K$, 
which is compact by properness of the action of $G$ on $X$.
\end{remark}

We now state the weaker Hilbert-Smith conjecture:
\begin{conjecture}\label{cj.hs}
  Let $G$ be a locally compact group acting freely and properly on a connected finite-dimensional 
  topological manifold $M$ such that the orbit space $M/G$ is finite dimensional. Then $G$ is a Lie group.
\end{conjecture}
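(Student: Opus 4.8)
The plan is to establish \Cref{cj.hs} \emph{conditionally on the Borsuk--Ulam conjecture \Cref{cj.bdh}} --- this conditional implication being the corollary which follows. Assume \Cref{cj.bdh}, and suppose for contradiction that some locally compact group $G$, not a Lie group, acts freely and properly on a connected finite-dimensional topological manifold with finite-dimensional orbit space. By the standard reduction to $p$-adic groups (Lee \cite{lee-pad}, via the Gleason--Yamabe theorem; for the free, finite-dimensional-orbit-space version see also \cite{dran-surv}) together with the fact that freeness and properness are inherited by closed subgroups (\Cref{re.prop}), it suffices to rule out a free proper action of $\bZ_p$ on a connected finite-dimensional manifold $M$ with $M/\bZ_p$ finite-dimensional.

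So let such an action be given, and put $m=\dim M$ and $n=\dim(M/\bZ_p)<\infty$. By the dimension-raising phenomenon for free $p$-adic actions --- Smith's inequality $n\ne m$ \cite{sm-per}, refined via C.~T.~Yang's computation \cite{y-p} and Alexandroff's coincidence of cohomological and covering dimension \cite{al-dim} (recalled above) --- one has $n=m+2$, and in particular $n>m$. The aim is to turn this dimension gap into a $\bZ_p$-equivariant map $\mu^{m+2}\to\mu^{m}$ between Menger compacta carrying free $\bZ_p$-actions, which is exactly what Ageev's \Cref{cj.ag} forbids.

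The main new step is the implication \Cref{cj.bdh}$\,\Rightarrow\,$\Cref{cj.ag} in the cases required. Let $G$ be a non-trivial zero-dimensional compact metric group, let $\mu^{a}$ and $\mu^{b}$ with $a>b$ carry free $G$-actions, and suppose $f:\mu^{a}\to\mu^{b}$ is a $G$-map. Consider the free $G$-space $Y:=\mu^{b}\ast G^{\ast(a-b)}$ --- the join of $\mu^{b}$ with $a-b$ copies of $G$, with the diagonal action. Using that the standard free action on $G^{\ast(a-b)}$ is locally trivial, one trivializes near the points of $Y$ where the $\mu^{b}$-coordinate degenerates and obtains $\dim(Y/G)\le\max(\dim(\mu^{b}/G),a)$, which equals $a$ provided $\dim(\mu^{b}/G)\le a$. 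Granting this, the free $a$-universality of $\mu^{a}$ from \Cref{th.mug-univ} (applied with the empty closed invariant subspace) produces a $G$-map $g:Y\to\mu^{a}$, and then $f\circ g:Y\to\mu^{b}$, restricted along the $G$-embedding $\mu^{b}\ast G\hookrightarrow Y$ of one join factor $G=G^{\ast1}$, is a $G$-map $\mu^{b}\ast G\to\mu^{b}$ --- contradicting \Cref{cj.bdh} for $X=\mu^{b}$. The side condition $\dim(\mu^{b}/G)\le a$ is the reason only a partial form of \Cref{cj.ag} is obtained; for the instance $a=m+2$, $b=m$ needed here it causes no trouble, since the action on the relevant $\mu^{m}$ may be taken to be a ``mild'' free $\bZ_p$-action with $m$-dimensional orbit space (such actions exist by work of Dranishnikov and Ageev), while the action on the source $\mu^{m+2}$ is left completely arbitrary by the argument.

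It remains to produce the forbidden map $\mu^{m+2}\to\mu^{m}$ from the given action on $M$; this last step, \Cref{cj.ag}$\,\Rightarrow\,$\Cref{cj.hs}, is Ageev's \cite{ag-hs}. One passes to a $\bZ_p$-invariant open --- hence manifold, hence ANR --- piece of $M$ with compact closure, uses the strong equivariant universality of Menger compacta (the equivariant form of Bestvina's recognition criterion) to realize $M$ equivariantly as a closed invariant subset of a free $\bZ_p$-action on $\mu^{m+2}$, and then, using that $M$ is a manifold and that $n=m+2>m$, extracts from this configuration a $\bZ_p$-equivariant map onto a free $\bZ_p$-action on $\mu^{m}$, yielding the contradiction. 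The two substantive passages are \Cref{cj.bdh}$\,\Rightarrow\,$\Cref{cj.ag} and \Cref{cj.ag}$\,\Rightarrow\,$\Cref{cj.hs}, and in both the crux is the same: keeping control of the covering dimension of orbit spaces of free zero-dimensional-group actions --- the very quantity that rises instead of dropping here --- so that \Cref{th.mug-univ} can be brought to bear. Coping with actions whose orbit-space dimension exceeds what that theorem can absorb is the main obstacle, and is why the deduction of \Cref{cj.ag} from \Cref{cj.bdh} is only partial.
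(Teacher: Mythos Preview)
Your middle step --- deducing the partial Ageev statement from \Cref{cj.bdh} --- is essentially the paper's \Cref{th.main}. The iterated join $Y=\mu^b*G^{*(a-b)}$ is more than you need: already $\mu^b*G$ has orbit-space dimension $\le\max(\dim(\mu^b/G),b+1)\le a$ by the paper's \Cref{le.orb-dim}, and your own final restriction to $\mu^b*G\hookrightarrow Y$ shows the extra join factors play no role. So this part matches the paper.

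The genuine gap is in your last step. You propose to manufacture, from the given free $\bZ_p$-action on the $m$-manifold $M$ with $\dim(M/\bZ_p)=m+2$, a $\bZ_p$-equivariant map $\mu^{m+2}\to\mu^m$. Your sketch --- embed $M$ equivariantly in $\mu^{m+2}$ and then ``extract'' a map onto $\mu^m$ --- does not supply a mechanism for the extraction, and none of the tools in play provides one. Free $m$-universality of $\mu^m$ (\Cref{th.mug-univ}) would yield a $\bZ_p$-map $M\to\mu^m$ only under the condition $\dim(M/\bZ_p)\le m$, which is exactly what fails. And Ageev's \Cref{th.4}, which you cite, runs the other way: its contrapositive manufactures a map $\mu^{m'}\to\mu^{n'}$ from an ANR whose orbit space has dimension $\le n'$, whereas your $M$ has orbit-space dimension $m+2$, not $\le m$. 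So the Smith--Yang identity $n=m+2$, on which your whole strategy hinges, does not actually buy you the map you need.

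The paper sidesteps this entirely by \emph{not} tying the argument to the particular $M$. For each $n$ one takes Dranishnikov's free $\bZ_p$-actions on $\mu^{n+1}$ and $\mu^n$ with $\dim(\mu^n/\bZ_p)=n$; your own step-6 argument (the paper's \Cref{th.main}, with the side condition $\dim(\mu^n/\bZ_p)\le n+1$ trivially met) rules out a $\bZ_p$-map $\mu^{n+1}\to\mu^n$; then \Cref{th.4}, applied as stated rather than contraposed, gives $\dim(M/\bZ_p)>n$ for every ANR $M$ with a free $\bZ_p$-action. Since $n$ is arbitrary, $\dim(M/\bZ_p)=\infty$ --- contradiction. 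The Smith--Yang dimension-raising is never invoked. Separately, the paper does not treat the reduction from locally compact $G$ to $\bZ_p$ as ``standard'': preserving finite-dimensionality of the orbit space under passage to open subgroups and to compact normal subgroups requires the work in \Cref{le.osg}, \Cref{le.dim-bd}, \Cref{le.cpct}, \Cref{le.bd-dim-0}, and \Cref{le.red}.
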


An examination of the argument presented in \cite[$\S$2.7.2]{tao-hilb} will show that the reduction of the general Hilbert-Smith 
conjecture to its $p$-adic version (\Cref{cj.effp}) also goes through in the present setting of free actions and finite-dimensional orbit 
spaces:

\begin{conjecture}\label{cj.hsp}
  Let $p$ be a prime number and $M$ a connected finite-dimensional topological manifold. Then $\bZ_p$ cannot act freely on $M$ so that $M/\bZ_p$ is finite-dimensional.
\end{conjecture}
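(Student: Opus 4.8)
The plan is to argue by contradiction using \Cref{cj.ag} as a running hypothesis, extracting from a putative action a forbidden equivariant map between Menger compacta of different dimensions. So suppose $\bZ_p$ acts freely on a connected $n$-manifold $M$ with $\dim(M/\bZ_p)<\infty$. The first step is to pin down this dimension exactly: as recalled in the discussion above, Yang's theorem \cite{y-p} together with Alexandroff's coincidence of cohomological and covering dimension \cite{al-dim} forces
\begin{equation*}
  \dim(M/\bZ_p)=n+2
\end{equation*}
whenever the orbit space is finite-dimensional. I would then localize: saturating a closed coordinate ball $B\subseteq M$ produces a compact invariant set $X:=\bZ_p\cdot B$ that is a compact ANR, has $\dim X=n$, and (its orbit space containing a nonempty open piece of $M/\bZ_p$) satisfies $\dim(X/\bZ_p)=n+2$. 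Everything is now concentrated in a single compact free $\bZ_p$-space whose total dimension is two less than its orbit-space dimension.

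It is cleanest to phrase the target as a statement about this dimension gap. For any free $\bZ_p$-action the orbit map has $0$-dimensional (Cantor) fibres, so the Hurewicz inequality for dimension-lowering maps \cite{eng} gives $\dim X\le\dim(X/\bZ_p)$ automatically; the whole content is the reverse bound. Thus it suffices to prove, granting \Cref{cj.ag}, that a free $\bZ_p$-action on a finite-dimensional compact ANR cannot strictly raise dimension, i.e.\ that $\dim(X/\bZ_p)=\dim X$. Applied to the $X$ above, where $\dim(X/\bZ_p)=n+2>n=\dim X$, this is precisely the contradiction that rules out the action on $M$.

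To produce the contradiction from the gap, I would use $X$ to manufacture an equivariant map $\mu^{n+2}\to\mu^{n}$, which \Cref{cj.ag} forbids since $n+2>n$; here $\mu^{n}$ and $\mu^{n+2}$ carry the free $n$- and $(n+2)$-universal $\bZ_p$-actions of \Cref{th.mug-univ}, normalized so that $\dim(\mu^{n}/\bZ_p)=n$. Free $(n+2)$-universality furnishes an equivariant map $X\to\mu^{n+2}$, which the equivariant Bestvina recognition criterion (the strong $\bZ_p$-universality discussed in the introduction) upgrades to an equivariant embedding $X\hookrightarrow\mu^{n+2}$, since $\dim X=n$. Using that $X$ is a compact equivariant ANR, one extends the retraction onto $X$ over an invariant neighborhood in $\mu^{n+2}$; the sought map $\mu^{n+2}\to\mu^{n}$ is then to be assembled from this embedding and retraction together with the $n$-dimensionality of $X$, which is what must drop the Menger dimension from $n+2$ to $n$.

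The hard part — indeed the crux of the entire reduction — is exactly this last dimension drop, because the orbit-space dimension $n+2$ is an absolute barrier to the naive approach: \Cref{de.univ} only extends equivariant maps into $\mu^{n}$ out of free spaces whose \emph{orbit space} is at most $n$-dimensional, and anything built directly from $X$ inherits orbit-space dimension $n+2$. Overcoming this uses not merely finite-dimensionality but the manifold structure of $M$: it is the Smith--Yang local-homology phenomenon that both forces the gap $\dim(M/\bZ_p)=\dim M+2$ and rigidifies the action enough for the equivariant compression into $\mu^{n}$ to exist. This is where the full strength of \Cref{cj.ag} is consumed, converting the mere presence of the dimension gap into the forbidden map $\mu^{n+2}\to\mu^{n}$ and thereby completing the proof.
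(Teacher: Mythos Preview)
Your argument has a genuine gap at exactly the point you flag as ``the hard part''. You need to manufacture an equivariant map $\mu^{n+2}\to\mu^{n}$ out of the compact piece $X\subset M$, and you never actually do this. Even granting the equivariant embedding $X\hookrightarrow\mu^{n+2}$ and an equivariant neighborhood retraction $U\to X$, you are left with a partial map defined only on an open $U\subset\mu^{n+2}$, landing in a space $X$ whose \emph{orbit space} has dimension $n+2$. To push on to $\mu^{n}$ you would need an equivariant map $X\to\mu^{n}$; but free $n$-universality (\Cref{de.univ}) only supplies such maps when $\dim(X/\bZ_p)\le n$, and here $\dim(X/\bZ_p)=n+2$. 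Your closing appeal to ``the Smith--Yang local-homology phenomenon'' as what ``rigidifies the action enough for the equivariant compression into $\mu^{n}$ to exist'' is not an argument --- it is a restatement of the goal. (There are subsidiary issues too: $X=\bZ_p\cdot B$ is not obviously an ANR, and the retraction only covers $U$, not all of $\mu^{n+2}$.)

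The paper's route sidesteps this entirely. It does not attempt to build $\mu^{m}\to\mu^{n}$ from the manifold action; rather, it invokes Ageev's result (\Cref{th.4}) as a black box: \emph{if} there is no equivariant $\mu^{m}\to\mu^{n}$ for suitable free actions with $\dim(\mu^{m}/\bZ_p)<\infty$, \emph{then} every free $\bZ_p$-ANR $M$ has $\dim(M/\bZ_p)>n$. Granting \Cref{cj.ag} (or, along the paper's main line, obtaining the non-existence from \Cref{cj.bdh} via \Cref{th.main}), one runs \Cref{th.4} for every $n$ and concludes $\dim(M/\bZ_p)=\infty$. In particular, Yang's formula $\dim(M/\bZ_p)=n+2$ is never used: instead of pinning down the orbit-space dimension and trying to drop from $\mu^{n+2}$ to $\mu^{n}$, the paper lets $n$ range and contradicts finite-dimensionality directly. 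What you are effectively attempting is a bare-hands proof of the contrapositive of \Cref{th.4}, which is precisely the nontrivial step the paper imports from \cite{ag-hs}.
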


\subsection{Reduction to compact groups}
We set about reducing \Cref{cj.hs} to \Cref{cj.hsp}  gradually, starting by transporting the discussion over to compact (as opposed to locally compact) groups:

\begin{theorem}\label{le.cpct}
  If \Cref{cj.hs} holds for compact groups then it does in general. 
\end{theorem}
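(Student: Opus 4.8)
The plan is to reduce from a locally compact group $G$ acting freely and properly on a connected finite-dimensional manifold $M$ with $\dim(M/G)<\infty$ to a compact group satisfying the same hypotheses, so that the assumed validity of \Cref{cj.hs} for compact groups can be invoked. The essential structural input is the Gleason--Yamabe theorem: a locally compact group $G$ has an open subgroup $G'$ which contains a compact normal subgroup $K\trianglelefteq G'$ with $G'/K$ a Lie group. If we can show that $K$ must be a Lie group, then $G'$ is an extension of a Lie group by a Lie group, hence a Lie group; since $G'$ is open in $G$, the identity component of $G$ lies in $G'$, so $G$ is locally Euclidean, and \Cref{h5} (Hilbert's fifth problem) finishes the job. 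So the whole argument funnels into proving that $K$ is Lie.

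First I would pass to the open subgroup $G'\le G$. By \Cref{re.prop} properness and freeness are hereditary, so $G'$ still acts freely and properly on $M$. Next I restrict to the compact normal subgroup $K\le G'$: again by \Cref{re.prop}, $K$ acts freely and properly on $M$. It remains to verify the two finite-dimensionality hypotheses of \Cref{cj.hs} for the $K$-action, namely that $M$ is a connected finite-dimensional manifold (immediate, $M$ is unchanged) and that $M/K$ is finite-dimensional. For the latter: the quotient $M/K$ carries a residual free and proper action of the Lie group $G'/K$, and $(M/K)/(G'/K)\cong M/G'$, which is open in $M/G$ and hence finite-dimensional. Since $G'/K$ is a Lie group acting freely and properly, the orbit map $M/K\to M/G'$ is a locally trivial fibre bundle with fibre the manifold $G'/K$ (by the slice theorem for proper Lie group actions), so $M/K$ is locally homeomorphic to $(M/G')\times(G'/K)$ and therefore $\dim(M/K)\le \dim(M/G')+\dim(G'/K)<\infty$. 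With all hypotheses of \Cref{cj.hs} verified for the compact group $K$, the assumed compact case gives that $K$ is a Lie group, and the extension argument above concludes.

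I expect the main obstacle to be the last finite-dimensionality check — establishing that $M/K$ is finite-dimensional. One must be careful that the $G'/K$-action on $M/K$ is genuinely proper (so that the slice theorem and the bundle description apply) and that $M/K$ is a reasonable enough space — locally compact, Hausdorff, separable metrizable — for the dimension bookkeeping; this relies on the Antonyan--de Neymet type results cited in \Cref{se.prel} on metrizability of proper quotients, and on properness descending correctly to the quotient action by $G'/K$. A secondary point to get right is that the reduction genuinely lands inside the hypotheses as stated: $M$ must remain connected (true, it is untouched) and one should not inadvertently require $M/K$ itself to be a manifold, only finite-dimensional. Once these technical points are in place, the algebra of the extension $1\to K\to G'\to G'/K\to 1$ and the appeal to \Cref{h5} are routine.
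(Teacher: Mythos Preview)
Your overall strategy matches the paper's: invoke Gleason--Yamabe to find an open subgroup $G'\le G$ and a compact normal $K\trianglelefteq G'$ with $G'/K$ Lie, then show $K$ is Lie by applying the compact case of \Cref{cj.hs} to the $K$-action on $M$. The paper packages this into two auxiliary results, \Cref{le.osg} (reduction to the open subgroup) and \Cref{le.dim-bd} (the bound $\dim(M/K)<\infty$), and your outline tracks both steps.

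There is, however, a genuine error in your argument for $\dim(M/G')<\infty$. You assert that ``$M/G'$ is open in $M/G$'', but $M/G'$ is not a subspace of $M/G$ at all: every $G$-orbit decomposes into $[G:G']$ distinct $G'$-orbits, so the canonical map $\pi_{G',G}:M/G'\to M/G$ is a surjection with fibres homeomorphic to the discrete coset space $G/G'$, not an inclusion. What is true---and what the paper establishes as the main content of \Cref{le.osg}---is that $\pi_{G',G}$ is a \emph{local homeomorphism}, from which $\dim(M/G')=\dim(M/G)<\infty$ follows. This is not automatic: one must use properness and freeness to find, around each point of $M$, a neighbourhood $V$ with $\{g\in G: Vg\cap V\ne\emptyset\}\subseteq G'$, so that locally the $G'$- and $G$-orbit spaces agree. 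Once this gap is filled, your remaining estimate $\dim(M/K)\le \dim(M/G')+\dim(G'/K)$ is correct; the paper derives the same inequality in \Cref{le.dim-bd} via \cite[Theorem~3.10]{ant-dim-proper}, after explicitly verifying (as you rightly flag) that the induced $G'/K$-action on $M/K$ is proper.
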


We need some preparation, in the form of a series of auxiliary results. First, given that the various reductions involve passing from actions of $G$ to the restricted actions of various subgroups of $G$ and the resulting conjectures stipulate the finite-dimensionality of orbit spaces, we will make repeated use of the following remark.

\begin{lemma}\label{le.osg}
  If \Cref{cj.hs} holds for an open subgroup $H\le G$ then it does for $G$ as well. 
\end{lemma}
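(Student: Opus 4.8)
The statement to prove is \Cref{le.osg}: if \Cref{cj.hs} holds for an open subgroup $H\le G$, then it holds for $G$. So suppose $G$ acts freely and properly on a connected finite-dimensional manifold $M$ with $M/G$ finite-dimensional, and suppose $G$ is \emph{not} a Lie group; I must derive a contradiction with the hypothesis that \Cref{cj.hs} holds for $H$. The obvious move is to restrict the $G$-action to the open subgroup $H$: by \Cref{re.prop} the $H$-action on $M$ is again free and proper, and by hypothesis $M$ is connected and finite-dimensional, so the only thing left to verify before invoking the $H$-case of \Cref{cj.hs} is that the orbit space $M/H$ is finite-dimensional, and that $H$ fails to be a Lie group. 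The latter is the easy half: since $H$ is open in $G$, it is a neighborhood of the identity, so $G$ is a Lie group if and only if $H$ is (a group with an open Lie subgroup is locally Euclidean, hence Lie by \Cref{h5}); thus $G$ non-Lie forces $H$ non-Lie.

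First, then, I would spell out the reduction skeleton: restrict to $H$, note freeness and properness descend by \Cref{re.prop}, note $M$ is unchanged (still a connected finite-dimensional manifold), and record the equivalence ``$G$ Lie $\iff$ $H$ Lie'' from openness of $H$. Second, I would establish finite-dimensionality of $M/H$. The key observation is that the quotient map $M/H \to M/G$ has fibers that are the $G/H$-orbits, and $G/H$ is a \emph{discrete} space: indeed $H$ open in $G$ means the coset space $G/H$ is discrete. Concretely, $M/H \to M/G$ is a covering-type projection whose fibers are discrete (in bijection with $G/H$), and a map with discrete fibers onto a finite-dimensional space does not raise dimension. I would phrase this via the standard dimension-theory fact (e.g. from \cite{eng}) that if $f:A\to B$ is a closed — or here, open — surjection with zero-dimensional (discrete) fibers between separable metrizable spaces, then $\dim A\le \dim B$; applying it with $A=M/H$, $B=M/G$ gives $\dim(M/H)\le\dim(M/G)<\infty$. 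One should check the map $M/H\to M/G$ is well-behaved enough (open, with the local product structure coming from properness, so that it is genuinely a local homeomorphism onto its image on each sheet when $G/H$ is discrete), but this is routine given properness.

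Third, having verified all the hypotheses, I apply \Cref{cj.hs} for $H$: it tells me $H$ \emph{is} a Lie group, contradicting $H$ non-Lie from Step 1. This contradiction shows $G$ must have been Lie all along, completing the proof. The one point requiring genuine (if brief) care — and the place I'd expect a referee to look — is the dimension inequality $\dim(M/H)\le\dim(M/G)$: one must make sure the projection $M/H\to M/G$ really has the properties needed to invoke the subspace/zero-dimensional-fiber dimension theorem, which in turn rests on the properness of the action guaranteeing that $M\to M/G$ and $M\to M/H$ are open maps with the appropriate slice structure, together with discreteness of $G/H$. Everything else is formal bookkeeping with \Cref{re.prop} and the ``open subgroup $\Rightarrow$ locally Euclidean iff'' remark.
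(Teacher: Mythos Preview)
Your proposal is correct and follows essentially the same route as the paper: restrict the action to $H$, note that freeness and properness descend (\Cref{re.prop}), observe that $H$ Lie $\iff$ $G$ Lie because $H$ is open, and reduce everything to showing $\dim(M/H)<\infty$ from $\dim(M/G)<\infty$. The paper does exactly this, and its substantive content is the claim that $\pi_{H,G}:M/H\to M/G$ is a \emph{local homeomorphism}, which it proves carefully by using properness to find, around each $p\in M$, an open $V\ni p$ with $V.V\subseteq H$ (possible because $H$ is open and the net of compact neighborhoods of $p$ eventually has $V.V$ landing in $H$); from this it follows that the $H$-cosets of $V$ are disjoint sheets over $\pi_G(V)$.

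The one place where your write-up is looser than the paper is the appeal to a ``standard dimension-theory fact'' for open surjections with discrete fibers. The clean Hurewicz-type inequality in \cite{eng} is stated for \emph{closed} maps, and an open map with discrete fibers is not automatically a local homeomorphism, so that citation alone would not close the argument. You yourself flag that one really wants $\pi_{H,G}$ to be a local homeomorphism ``on each sheet,'' and that is precisely the claim the paper isolates and proves; what you call ``routine given properness'' is in fact the heart of the paper's proof of this lemma. So your plan is right, but the step you labeled routine is exactly where the work lies.
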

\begin{proof}
Under hypothesis of \Cref{cj.hs}, $G$ acts freely and properly on a topological manifold $M$, hence, by \Cref{re.prop},
$H$ is again acting freely and properly.
If $H$ is a Lie group, then so is $G$, since $H$ is its open subgroup.
The only observation needed here is that if $M/G$ is finite-dimensional then so is $M/H$. In fact, we will prove more:

  {\bf Claim: Under the hypotheses, $M/H\to M/G$ is a local homeomorphism.} Given this, it follows from the fact that the property of being of dimension $\le n$ is a local property (\cite[Definition 1.1.1]{eng}) that the two spaces in question have equal dimension; it thus remains to prove the claim. 

  We denote the various canonical projection maps as in the diagram
  \begin{equation*}
      \begin{tikzpicture}[auto,baseline=(current  bounding  box.center)]
    \path[anchor=base] (0,0) node (m) {$M$} +(4,0) node (mg) {$M/G$.} +(2,-.5) node (mh) {$M/H$};
         \draw[->] (m) to[bend left=16] node[pos=.5,auto] {$\scriptstyle \pi_G$} (mg);
         \draw[->] (m) to[bend right=6] node[pos=.5,auto,swap] {$\scriptstyle \pi_H$} (mh);
         \draw[->] (mh) to[bend right=6] node[pos=.5,auto,swap] {$\scriptstyle \pi_{H,G}$} (mg);
  \end{tikzpicture}
  \end{equation*}

Let $p_H\in M/H$ be an arbitrary point. We have to show that some open neighborhood $p_H\in U\subseteq M/H$ is mapped homeomorphically onto its image by $\pi_{H,G}$. 

We propose to do this as follows:
  \begin{enumerate}
    \renewcommand{\labelenumi}{(\alph{enumi})}
  \item fix some $p\in \pi_H^{-1}(p_H)$;
  \item show that there is some open neighborhood
    \begin{equation*}
      p\in V\subseteq M 
    \end{equation*}
    such that the set
    \begin{equation*} 
    V.V=\{g\in G\ |\ Vg\cap V\ne\emptyset\}
    \end{equation*}
    is contained in $H$;
  \item deduce from this containment that $VgH$ and $VH$ are disjoint if $gH\ne H$ are different cosets, and finally
  \item take $U\subseteq M/H$ to be $\pi_H(V)$.     
  \end{enumerate}

Once (b) is in place (c) follows immediately: if $VgH\cap VH\ne \emptyset$ then
\begin{equation*}
  vgh = v'
\end{equation*}
for some choice of $v,v'\in V$, etc. We then have $gh\in V.V\subseteq H$ by (b), and hence $g\in H$. 

Next, we argue that the choice of $U$ proposed in (d) meets the requirements (i.e. is mapped homeomorphically onto $\pi_{H,G}(U)=\pi_G(V)$). To see this, note that
\begin{equation}\label{eq:2}
  \pi_{H,G}^{-1}(\pi_{H,G}(U)) = \pi_H(VG) = \pi_H\left(VH\sqcup \bigcup_g VgH\right)
\end{equation}
where
\begin{itemize}
\item the union $\bigcup_g$ is over representatives of the cosets $gH\ne H$ and
\item the union $\sqcup$ is disjoint by (c).  
\end{itemize}
On the other hand, because the parenthetic sets on the right hand side of \Cref{eq:2} are invariant under the action of $H$, that right hand side is
\begin{equation*}
  \pi_H(VH)\sqcup \pi_H\left(\bigcup_g VgH\right) = U\sqcup \pi_H\left(\bigcup_g VgH\right).
\end{equation*}
In other words, $\pi_{H,G}^{-1}(\pi_{H,G}(U))$ breaks up as a disjoint union between $U$ and another open set. This suffices to conclude that
\begin{equation*}
  \pi_{H,G}|_U:U\to \pi_{H,G}(U)
\end{equation*}
is an isomorphism.

It thus remains to find a neighborhood $V\ni p$ as in (b). In order to do this, note first that the properness of the action ensures the existence of a compact neighborhood $M\supseteq W\ni p$ such that $W.W\subset G$ is compact. Now consider the net $(V)$ of compact neighborhoods
\begin{equation*}
p\in V\subseteq W  
\end{equation*}
ordered by reverse inclusion (i.e. $V_1\succeq V_2$ if $V_1\subseteq V_2$). Suppose that $V.V\not\subseteq H$ for any such $V$. This means that each of the sets $V.V\cap (G - H)$ is non-empty. Since these sets are contained in the compact set $W.W$, we can thus find a subnet $(V_{\alpha})_{\alpha}$ of $(V)$ and elements
\begin{equation*}
  g_{\alpha}\in {V_{\alpha}.V_{\alpha}}\cap (G-H) 
\end{equation*}
converging to some $g\in G-H$. But then, because $(V)$ converges to $p\in M$ and hence so does $(V_{\alpha})$, we have $pg=p$. Given that $g\in G-H$, this contradicts the freeness of the action and finishes the proof.
\end{proof}

To reduce the problem to compact groups, we need another lemma.
\begin{lemma}\label{le.dim-bd}
  Let $H$ be a locally compact group acting freely and properly on a locally compact separable metrizable space $X$ such that both $X$ and $X/H$ are finite-dimensional. We then have
  \begin{equation*}
    \dim(X/N)<\infty
  \end{equation*}
  for every compact normal subgroup $N\le H$.
\end{lemma}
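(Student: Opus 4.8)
The plan is to exploit the tower $X\to X/N\to X/H$, whose outer two terms are finite-dimensional by hypothesis, and to bound $\dim(X/N)$ from above by controlling the projection $q\colon X/N\to X/H$. Since the $H$-action is free and proper, for any $x_{0}\in X$ the orbit map $H\to Hx_{0}\subseteq X$ is a homeomorphism onto the closed orbit $Hx_{0}$, so monotonicity of dimension already gives $\dim H\le\dim X<\infty$. As $N$ is normal, $H/N$ is a locally compact group acting freely and properly on $X/N$ with orbit space $X/H$, and $q$ is exactly the orbit map of this action: in particular $q$ is closed (it is induced by the closed orbit map $X\to X/H$ via the surjection $X\to X/N$), and each of its fibres is the $N$-orbit space of a single $H$-orbit, hence --- the action being free --- homeomorphic to $H/N$. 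The Hurewicz inequality for closed maps \cite{eng} then gives
\begin{equation*}
  \dim(X/N)\le\dim(X/H)+\dim(H/N),
\end{equation*}
and the whole statement reduces to the purely group-theoretic claim that $\dim(H/N)<\infty$.

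For that claim I would appeal to the Gleason--Yamabe structure theory \cite{tao-hilb}: $H$ has an open subgroup $H_{1}$ carrying a compact normal subgroup $K\trianglelefteq H_{1}$ with $H_{1}/K$ a finite-dimensional Lie group, and since $\dim H<\infty$ one may in addition take $K$ to be profinite --- in the inverse system of such $K$'s the dimensions of the Lie quotients increase to $\dim H_{1}=\dim H<\infty$, hence stabilise, and the $K$ attaining the maximum is zero-dimensional. Because $H_{1}$ is open, $H/N$ is a disjoint union of clopen copies of $H_{1}/(H_{1}\cap N)$, so it suffices to bound $\dim\bigl(H_{1}/(H_{1}\cap N)\bigr)$; relabel $H_{1}$ as $H$ and $H_{1}\cap N$ as $N$. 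Now $KN$ is a compact subgroup of $H$ containing $K$, and the projection $H/N\to H/(KN)$ is again the orbit map of a compact group --- namely $KN/N$ --- acting on $H/N$; it is therefore closed, with fibres homeomorphic to $KN/N\cong K/(K\cap N)$, a coset space of the profinite group $K$ and hence zero-dimensional. A second application of the Hurewicz inequality gives $\dim(H/N)\le\dim\bigl(H/(KN)\bigr)$, while $H/(KN)=(H/K)/(KN/K)$ is the quotient of the Lie group $H/K$ by the compact subgroup $KN/K$, so $\dim\bigl(H/(KN)\bigr)\le\dim(H/K)=\dim H<\infty$.

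Putting the two halves together yields $\dim(X/N)\le\dim(X/H)+\dim(H/N)<\infty$. The real obstacle --- everything else being standard bookkeeping with proper actions, covering-type maps and classical dimension theory --- is the finiteness of $\dim(H/N)$: one is tempted to estimate it via the orbit map $H\to H/N$, but that map has ($N$-shaped) fibres on the wrong side and only bounds $\dim H$ from below. The point is instead to map $H/N$ \emph{down} onto a homogeneous space of the Lie group $H/K$ with zero-dimensional fibres, and the subgroup $KN$ supplied by the Gleason--Yamabe decomposition of $H$ is precisely what makes such a map available.
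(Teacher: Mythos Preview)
Your argument and the paper's run in parallel for the first half: both factor $X\to X/N\to X/H$, observe that the second map is closed with fibres $\cong H/N$, and deduce the Hurewicz-type bound $\dim(X/N)\le\dim(X/H)+\dim(H/N)$. The paper packages this via \cite[Theorem~3.10]{ant-dim-proper} after checking that the induced $H$-action on $X/N$ is proper, while you go straight to the classical Hurewicz inequality after noting that $q$ is closed; these are equivalent routes.

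The divergence is in bounding $\dim(H/N)$. The paper dispatches this in one line by citing Mostert's theorem \cite[Corollary~2]{mst-x}: $H\to H/N$ is a locally trivial bundle, so locally $H/N$ embeds into $H$ and $\dim(H/N)\le\dim H<\infty$. Your detour through Gleason--Yamabe is considerably longer, and the step ``the dimensions of the Lie quotients increase to $\dim H_{1}$'' is not quite justified as written: to know $\dim(H_{1}/K)\le\dim H_{1}$ you would need a local section of $H_{1}\to H_{1}/K$, which is precisely Mostert's theorem --- so the argument is circular at that point. The claim is nevertheless repairable without Mostert: fixing one $K_{1}$ in the Gleason--Yamabe system, the compact group $K_{1}$ satisfies $\dim K_{1}=\sup_{K\subseteq K_{1}}\dim(K_{1}/K)$ (a standard fact for compact groups, e.g.\ in \cite{hm}), and since $\dim K_{1}\le\dim H<\infty$ the Lie dimensions $\dim(H_{1}/K)=\dim(H_{1}/K_{1})+\dim(K_{1}/K)$ stabilise. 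From there your profinite-$K$ argument and the $KN$ trick go through cleanly. So your proof is salvageable, but the paper's single citation to Mostert is much more economical.
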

\begin{proof}
  Since $N$ is normal, there is an action of $H$ on $X/N$ given by $[x]_{X/N}\cdot h:=[xh]_{X/N}$. We begin with

  {\bf Claim: The action of $H$ on $X/N$ is proper.} To see this denote by $\pi$ the canonical surjection $X\to X/N$ and let
  \begin{equation*}
    K,K'\subseteq X/N
  \end{equation*}
  be two compact subsets. We then have
  \begin{equation*}
    K.K' = (\pi^{-1}(K).\pi^{-1}(K'))N,
  \end{equation*}
  and the right hand side is compact because
  \begin{itemize}
  \item $N$ is compact;
  \item this in turn implies that $\pi$ is proper and hence the preimages $\pi^{-1}(-)$ on the right hand side are compact;
  \item finally, the properness of the original $H$-action on $X$ shows that $\pi^{-1}(K).\pi^{-1}(K')$ is compact.
  \end{itemize}
This finishes the proof of the claim.   

The orbits of the action of $H$ on $X/N$ are homeomorphic to $H/N$ and the orbit space is homeomorphic to $X/H$. Given the claim, \cite[Theorem~3.10]{ant-dim-proper} (which requires properness) is applicable and we have
  \begin{equation}\label{eq:1}
    \dim(X/N)\le \dim(X/H)+\dim(H/N). 
  \end{equation}
Since $H$ acts freely on a metrizable and finite-dimensional space, $H$ is automatically metrizable and finite-dimensional. By \cite[Corollary 2]{mst-x} $H\to H/N$ is a locally trivial bundle and therefore $\dim(H/N)<\infty$. It follows that both summands on the right hand side of \Cref{eq:1} are finite, finishing the proof. 
\end{proof}

We are now ready for the

\pf{le.cpct}
\begin{le.cpct}
Assume that \Cref{cj.hs} is true for compact groups.
  We follow the strategy pursued in \cite[$\S$2.7.2]{tao-hilb}, using the Gleason-Yamabe theorem. Recall that the latter states that 
  every locally compact group $G$ has an open subgroup $H\le G$ which in turn has a normal compact subgroup $N$ such that 
  $H/N$ is Lie.

  \Cref{le.osg} has already reduced the problem to its analogue for $H$.
  Now, if $N$ is Lie, then so is $H$, as it is an extension of a Lie group $H/N$ by Lie group $N$.
  Again, we only need to prove the implication
  \begin{equation}\label{eq:gzp}
    \dim(M/H)<\infty\Rightarrow \dim(M/N)<\infty. 
  \end{equation}
  This, however, is nothing but \Cref{le.dim-bd} ($N$ is a compact normal subgroup of $H$).
\end{le.cpct}

\subsection{Reduction to p-adic integers}
Finally, we can reduce \Cref{cj.hs} to the $p$-adic case. 

\begin{theorem}\label{le.red}
  \Cref{cj.hsp} implies \Cref{cj.hs}. 
\end{theorem}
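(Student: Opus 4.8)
The plan is to reduce \Cref{cj.hs} for compact groups (which, via \Cref{le.cpct}, suffices) to the $p$-adic statement \Cref{cj.hsp}. So assume \Cref{cj.hsp} holds and suppose for contradiction that a compact non-Lie group $G$ acts freely and properly on a connected finite-dimensional manifold $M$ with $\dim(M/G)<\infty$. The first step is a structural one: since $G$ is compact but not Lie, it is not a projective limit of Lie groups with eventually trivial kernels, so by the classical structure theory of compact groups (the Gleason--Yamabe theorem in the compact case, or equivalently the fact that $G=\varprojlim G/N_i$ over compact normal subgroups $N_i$ with $G/N_i$ Lie) there is a compact normal subgroup $N\trianglelefteq G$ with $G/N$ Lie but $N$ itself not Lie. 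A compact group that is not a Lie group contains, by Newman's theorem together with the Peter--Weyl/structure analysis (this is precisely the reduction carried out in \cite{lee-pad}), a subgroup isomorphic to $\bZ_p$ for some prime $p$. The cleanest route is: $N$ not Lie $\Rightarrow$ $N$ has a closed subgroup isomorphic to $\bZ_p$; this is standard and is the content of \cite[Theorem 3.1]{lee-pad} as reinterpreted in \cite[$\S$2.7.2]{tao-hilb}.

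Having located a copy of $\bZ_p\le G$, I restrict the $G$-action on $M$ to $\bZ_p$. By \Cref{re.prop} the restricted action of the closed subgroup $\bZ_p$ on $M$ is again free and proper. It remains to check the dimension hypothesis, i.e. that $M/\bZ_p$ is finite-dimensional; this is exactly where \Cref{le.dim-bd} enters. We know $\dim(M)<\infty$ and $\dim(M/G)<\infty$, and $\bZ_p$ is a compact (hence closed normal --- well, not normal, but this is not needed) subgroup. Actually the subtlety is that \Cref{le.dim-bd} is stated for a compact \emph{normal} subgroup of the acting group, whereas $\bZ_p$ need not be normal in $G$. The fix is a two-step application: first apply \Cref{le.dim-bd} to $N\trianglelefteq G$ with $H=G$, $X=M$, concluding $\dim(M/N)<\infty$; then $N$ acts freely and properly on $M$ with both $\dim M$ and $\dim(M/N)$ finite, and $\bZ_p$ is a closed subgroup of the compact group $N$, so I need a version of the dimension bound for (not necessarily normal) closed subgroups. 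This is available from \cite[Theorem~3.10]{ant-dim-proper} in the same way \Cref{le.dim-bd} used it: the $N$-space $M$ has $\bZ_p$-orbits homeomorphic to $\bZ_p$ (zero-dimensional) and orbit space $M/\bZ_p$ mapping onto $M/N$ with fibers $N/\bZ_p$, and since $N/\bZ_p$ is a compact finite-dimensional space (being a quotient of the compact finite-dimensional group $N$ --- finite-dimensionality of $N$ follows, as in \Cref{le.dim-bd}, from $N$ acting freely on a finite-dimensional metrizable space, using \cite[Corollary 2]{mst-x}), we get $\dim(M/\bZ_p)\le \dim(M/N)+\dim(N/\bZ_p)<\infty$.

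Now $\bZ_p$ acts freely (and properly) on the connected finite-dimensional manifold $M$ with $\dim(M/\bZ_p)<\infty$, which directly contradicts \Cref{cj.hsp}. Hence no such non-Lie compact $G$ exists, i.e. \Cref{cj.hs} holds for compact groups, and by \Cref{le.cpct} it holds in general.

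The main obstacle I anticipate is the non-normality of $\bZ_p$ inside $G$: \Cref{le.dim-bd} as formulated does not directly apply, so one must either (i) push the argument down to a non-normal closed subgroup by invoking the Antonyan dimension-of-quotients theorem \cite[Theorem~3.10]{ant-dim-proper} directly (which only needs properness of the $N$-action on $M$, not normality of $\bZ_p$), or (ii) argue that one may arrange $\bZ_p$ to sit inside a normal compact subgroup --- which is exactly what using $N$ as an intermediate step accomplishes. A secondary, more bookkeeping-level point is verifying that the restriction of the $G$-action to $\bZ_p$ (and to $N$) really does remain proper and free; this is handled cleanly by \Cref{re.prop}. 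Everything else --- the appeal to the compact structure theory to extract $\bZ_p$, and the final contradiction with \Cref{cj.hsp} --- is essentially a citation of \cite{lee-pad} combined with the earlier lemmas of this section.
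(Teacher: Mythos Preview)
Your argument is correct in outline and reaches the same conclusion, but it takes a detour that the paper avoids. The paper does not pass through an intermediate normal subgroup $N$ at all: it simply invokes \cite[Theorem~3.1]{lee-pad} to locate $\bZ_p$ directly inside the compact non-Lie $G$, and then appeals to \Cref{le.bd-dim-0}, which is stated for an \emph{arbitrary closed} subgroup $H\le G$ (no normality assumed) and yields $\dim(M/\bZ_p)\le \dim(M/G)+\dim(G/\bZ_p)<\infty$ in one line. You overlooked this lemma and instead reconstructed a special case of it by hand, first descending to $M/N$ via \Cref{le.dim-bd} and then bounding $\dim(M/\bZ_p)$ over $M/N$.

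One small technical point in your workaround: you invoke \cite[Theorem~3.10]{ant-dim-proper} for the map $M/\bZ_p\to M/N$, but that result is phrased for a proper \emph{group action} on the total space, which would require $\bZ_p\trianglelefteq N$ so that $N/\bZ_p$ acts on $M/\bZ_p$. In general $\bZ_p$ need not be normal in $N$. The fix is exactly what the paper does in the proof of \Cref{le.bd-dim-0}: observe that $M/\bZ_p\to M/N$ is a closed map with fibers homeomorphic to $N/\bZ_p$ (no group action needed), and apply \cite[Theorem~4.39]{eng} directly. With that adjustment your two-step route is sound; the paper's single application of \Cref{le.bd-dim-0} is just cleaner.
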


We once more need some preparatory remarks. In the same spirit as \Cref{le.dim-bd} we have 

\begin{lemma}\label{le.bd-dim-0}
  Let $X$ be a separable metrizable space acted upon freely by the compact group $G$. Then, for every closed subgroup $H\le G$ we have
  \begin{equation*}
    \dim(X/H)\le \dim(X/G) + \dim(G/H). 
  \end{equation*}
\end{lemma}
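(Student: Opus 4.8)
\textbf{Proof plan for Lemma~\ref{le.bd-dim-0}.}

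The plan is to mimic the structure of the proof of \Cref{le.dim-bd}, replacing the compact normal subgroup there by an arbitrary closed subgroup $H\le G$ here; the role of the ambient acting group is played by $G$ itself, and the key input will again be the dimension-estimate theorem \cite[Theorem~3.10]{ant-dim-proper} for proper actions. First I would set up the geometric picture: form the orbit space $X/H$ and observe that the residual action of $G$ does \emph{not} directly descend (since $H$ is not assumed normal), so instead I would work with the two canonical projections $X\to X/H$ and $X/H\to X/G$, the latter being the quotient by the natural (left) action of $G/H$ when $H$ is normal — but since $H$ need not be normal, the correct object is the fibration $X/H\to X/G$ whose fibres are the homogeneous spaces $G/H$. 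Concretely, $X\to X/G$ is a locally trivial principal $G$-bundle (here one uses that $G$ is compact and acts freely, so by the same results cited in \Cref{se.prel} the orbit space is metrizable and the bundle is locally trivial, e.g.\ via \cite[Corollary~2]{mst-x} or the local-triviality discussion around \Cref{le.loc-triv}), and associating to it the fibre $G/H$ gives a fibre bundle $X\times_G (G/H)\cong X/H\to X/G$ with fibre $G/H$ and structure group $G$.

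Next I would verify the two hypotheses needed to invoke \cite[Theorem~3.10]{ant-dim-proper}. The cited theorem states (in the setting of proper actions) that for a proper action with orbits homeomorphic to a fixed space $F$ and orbit space $B$, one has $\dim(\text{total space})\le \dim(B)+\dim(F)$. To apply it with total space $X/H$, base $X/G$, and fibre $G/H$, I need: (1) the map $X/H\to X/G$ to be realized as the orbit map of a proper action — this is immediate since $G$ is compact, so \emph{every} continuous action of $G$ (in particular the residual action on $X/H$ if $H$ is normal, or more robustly the bundle structure viewed appropriately) is automatically proper, compactness of $G$ making the properness condition $K.K'$ compact trivial; and (2) the fibres are genuinely homeomorphic to $G/H$, which holds because the original $G$-action on $X$ is free, so the stabilizer in $G$ of a point of $X/H$ is exactly a conjugate of $H$ and the orbit through it is $G/(\text{conjugate of }H)\cong G/H$. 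With (1) and (2) in hand, \cite[Theorem~3.10]{ant-dim-proper} yields $\dim(X/H)\le \dim(X/G)+\dim(G/H)$, which is exactly the claim.

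The one subtlety — and what I expect to be the main obstacle — is handling the non-normality of $H$ cleanly: one cannot literally speak of ``the action of $G/H$ on $X/H$''. The cleanest fix is to rephrase everything in bundle-theoretic language: $X\to X/G$ is a locally trivial principal $G$-bundle (compactness of $G$ plus freeness plus separable metrizability of the base, as already used repeatedly in \Cref{se.prel}), and $X/H$ is its associated $G/H$-bundle $X\times_G(G/H)$. Then the fibre-dimension bound $\dim(E)\le\dim(B)+\dim(F)$ for a locally trivial bundle $E\to B$ with fibre $F$ over a separable metrizable base is standard (it follows from the countable-sum theorem for $\dim$ applied to a countable trivializing cover $U_i\subseteq B$, since $\dim(U_i\times F)\le\dim(U_i)+\dim(F)\le\dim(B)+\dim(F)$ by the product inequality for separable metric spaces, e.g.\ \cite[Theorem~1.5.16]{eng}, and then $\dim E=\sup_i\dim(E|_{U_i})$). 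This route avoids \cite[Theorem~3.10]{ant-dim-proper} and its properness hypotheses entirely, at the cost of invoking local triviality of the bundle $X\to X/G$; given that local triviality of free actions of compact groups on separable metric spaces is already in force in this section, I would present the proof this way, with \cite[Theorem~3.10]{ant-dim-proper} mentioned only as an alternative.
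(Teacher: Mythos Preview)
Your proposal contains a genuine gap: both routes you sketch ultimately rely on the quotient map $X\to X/G$ being a \emph{locally trivial} principal $G$-bundle, and this is not available under the hypotheses of the lemma. Gleason-type local triviality of free actions holds for compact \emph{Lie} groups, but the lemma is stated (and applied in the proof of \Cref{le.red}) for arbitrary compact $G$ --- in particular for $G$ containing a copy of $\bZ_p$, which is precisely the non-Lie situation the whole section is about. Your appeals to \cite[Corollary~2]{mst-x} and to the discussion around \Cref{le.loc-triv} are misplaced: the former concerns local cross-sections for the map $H\to H/N$ between topological \emph{groups}, not for an action on an arbitrary $X$, and the latter treats a very special construction with a discrete group acting properly discontinuously. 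The assertion that ``local triviality of free actions of compact groups on separable metric spaces is already in force in this section'' is simply false; if it were true, \Cref{th.bu} would already settle \Cref{cj.bdh} for all compact $G$. Likewise, the first route via \cite[Theorem~3.10]{ant-dim-proper} does not go through as stated, since for non-normal $H$ there is no group action on $X/H$ with orbit space $X/G$, and you do not provide a substitute.

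The paper's proof avoids all of this by working directly with the \emph{map} $\pi_{H,G}:X/H\to X/G$ rather than with any group action or bundle structure. Since $G$ is compact, $\pi_{H,G}$ is a closed map, and freeness of the $G$-action on $X$ makes every fiber homeomorphic to $G/H$; one then invokes the Hurewicz dimension-raising inequality for closed maps with fibers of bounded dimension (\cite[Theorem~4.39]{eng}), which gives $\dim(X/H)\le\dim(X/G)+\dim(G/H)$ immediately. No local triviality, no associated-bundle construction, no residual group action is needed --- only closedness of the map and metrizability of the quotient spaces, both of which are already established in \Cref{se.prel}.
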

\begin{proof}
  If either $X/G$ or $G/H$ is infinite-dimensional then there is nothing to prove, so we assume both are finite-dimensional.

  Because $G$ is compact,
  \begin{equation*}
    \pi_{H,G}:X/H\to X/G
  \end{equation*}
  is a closed, finite-dimensional map (in the sense that its fibers, homeomorphic to $G/H$, are finite-dimensional). Coupled with the observation made in the discussion following \Cref{cv.cv} that all of our quotient spaces are metrizable, this ensures that \cite[Theorem 4.39]{eng} applies and delivers the conclusion.
\end{proof}

\begin{remark}\label{re.ant}
  The $H=\{1\}$ case of \Cref{le.bd-dim-0} is a particular instance of \cite[Theorem 1.3]{ant-dim-proper}.  
\end{remark}

Finally:

\pf{le.red}
\begin{le.red}
First, observe that we can reduce \Cref{cj.hs} to compact groups by \Cref{le.cpct}.
We prove the contrapositive of \Cref{le.red}, namely
we assume that there is a free action of a compact non-Lie group $G$ on a connected finite-dimensional manifold $M$
with $M/G$ finite-dimensional, and we find an action of $\bZ_p$ on $M$ as in~\Cref{cj.hsp}. By \cite[Theorem~3.1]{lee-pad}, we know that $G$ contains $\mathbb{Z}_p$ 
and by \Cref{le.bd-dim-0} applied to $H=\bZ_p$ we have
  \begin{equation*}
    \dim(M/G)<\infty \Rightarrow \dim(M/\bZ_p)<\infty.
  \end{equation*}
\end{le.red}

\subsection{A Borsuk-Ulam-type conjecture implies the weak Hilbert-Smith conjecture}
Next, as per \Cref{le.red}, our aim will be to show that \Cref{cj.bdh} implies \Cref{cj.hsp}. 
For the convenience of the reader, we include below a diagrammatic depiction of the logical dependence of the results. 
We denote conjectures, lemmas and theorems by $\mathrm{C}$, $\mathrm{L}$ and $\mathrm{T}$ 
respectively, and corollaries by $\mathrm{Cor}$.

\begin{equation*}
     \begin{tikzpicture}[auto,baseline=(current  bounding  box.center)]
       \path[anchor=base]
       (0,0)            node (17)   {$\mathrm{L}3.14$}
       +(1.8,0)         node (18)       {$\mathrm{Cor}3.15$}
       +(1.8,-.5)       node (++)               {$\oplus$}
       +(1.8,-1)        node (7)   {$\mathrm{T}1.3$}
       +(3.3,-1)                node (14)   {$\mathrm{T}3.10$}
       +(3.3,-.5)               node (+)                {$\oplus$}
       +(3.3,0)                 node (15)   {$\mathrm{T}3.11$}
       +(4.8,0)                 node (16)   {$\mathrm{Cor}3.12$}
       +(5.7,.8)                node (5)    {$\mathrm{C}0.5$}
       +(5.7,-1)                node (12)       {$\mathrm{C}3.2$}
       +(7.4,-.4)               node (3)                {$\mathrm{C}0.3$}
       +(7.5,-1.5)      node (11)   {$\mathrm{C}3.1$}
       +(8.9,-1)                node (2)    {$\mathrm{C}0.2$};
       \draw[-implies, double equal sign distance] (17) -- (18);
       \draw[-implies, double equal sign distance] (++) -- 
       (15);
       \draw[-implies, double equal sign distance] (+) -- (16);
       \draw[-implies, double equal sign distance] (5) -- (12);
       \draw[-implies, double equal sign distance] (3) to[bend right=10] (12);
       \draw[-implies, double equal sign distance] (3) to[bend left=10] (2);
       \draw[-implies, double equal sign distance] (12) to[bend  
right=10] node[pos=.5,auto,swap] {$\mathrm{T}3.7$} (11);
       \draw[-implies, double equal sign distance] (2) to[bend left=10] (11);
   \end{tikzpicture}
\end{equation*}

\vspace{2mm}
We follow the plan laid out in \cite{ag-hs}, as we now recall. First, we paraphrase \cite[Theorem 4]{ag-hs} in a form that will suit us here. This slightly more precise form of the statement is easily extracted from the proof in loc.\,cit.

\begin{theorem}\label{th.4}
  Let $p$ be a prime number and let the $p$-adic group $\bZ_p$ act freely on the Menger compacta $\mu^m$ and $\mu^n$ for some $m>n$ so that
  \begin{equation*}
    \dim(\mu^m/\bZ_p)<\infty. 
  \end{equation*}
  Suppose furthermore that there are no $\bZ_p$-equivariant maps $\mu^m\to \mu^n$.

  Then, for any ANR space $M$ acted upon freely by $\bZ_p$ we have $\dim(M/\bZ_p)>n$. \qedhere
\end{theorem}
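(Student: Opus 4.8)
The plan is to prove the contrapositive: supposing there is an ANR $M$ carrying a free $\bZ_p$-action with $\dim(M/\bZ_p)\le n$, I will produce a $\bZ_p$-equivariant map $\mu^m\to\mu^n$, contradicting the standing hypothesis. It will be obtained as a composite $\mu^m\xrightarrow{h}M\xrightarrow{g}\mu^n$, so the work splits into constructing the two arrows.

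The arrow $g\colon M\to\mu^n$ is formal. Since $M$ is a free $\bZ_p$-space with $\dim(M/\bZ_p)\le n$, the free $n$-universality of $\mu^n$ recorded in \Cref{th.mug-univ} applies with $Y:=M$ and the closed invariant subspace $Z:=\emptyset$: the empty equivariant map $\emptyset\to\mu^n$ extends to a $\bZ_p$-equivariant $g\colon M\to\mu^n$. (Incidentally $M$ is then automatically finite-dimensional, since the orbit map of a free action of the zero-dimensional group $\bZ_p$ cannot raise dimension; this is not needed below.)

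The arrow $h\colon\mu^m\to M$ is the substance of the theorem, and where the hypothesis $\dim(\mu^m/\bZ_p)<\infty$ is used; here I would follow Ageev's argument for \cite[Theorem~4]{ag-hs}. Set $B:=\mu^m/\bZ_p$, a finite-dimensional compactum. Exploiting $\dim B<\infty$, one resolves $\mu^m$ by a tower of free $\bZ_p$-spaces over finite polyhedra approximating $B$, and then lifts the orbit projection of $\mu^m$ into $M$ by an equivariant obstruction-theoretic induction over the cells of those polyhedra, using that $M$, being an ANR acted on by the compact group $\bZ_p$, is an equivariant ANR with the extension property required at each stage (in practice one descends to the profinite quotients $\bZ/p^k$, for which equivariant ANR theory is classical). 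The induction terminates precisely because $\dim B<\infty$, and passing to the limit yields $h$. Then $g\circ h\colon\mu^m\to\mu^n$ is $\bZ_p$-equivariant, the desired contradiction.

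The main obstacle is this last step: making the equivariant lifting argument rigorous, in particular handling the failure of local triviality of $\mu^m\to B$ and the non-Lie nature of $\bZ_p$, which forces the reduction to the quotients $\bZ/p^k$ together with a compatibility check across the resulting layers. The contrapositive reduction and the invocation of \Cref{th.mug-univ} for $g$ are, by contrast, routine.
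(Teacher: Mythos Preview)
The paper does not supply a proof of this theorem: it is stated as a paraphrase of \cite[Theorem~4]{ag-hs}, with the argument attributed entirely to Ageev (hence the bare \qedhere after the statement). Your contrapositive reduction and the factorization $\mu^m\xrightarrow{h} M\xrightarrow{g}\mu^n$ capture the natural shape of such an argument; the construction of $g$ via \Cref{th.mug-univ} is correct and routine, and you rightly isolate the construction of $h\colon\mu^m\to M$ as the substantive step requiring Ageev's equivariant extension machinery for ANR targets—precisely the content the paper outsources to \cite{ag-hs}. In short, your proposal and the paper agree: both defer the real work to Ageev.
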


Given \Cref{th.4}, \cite{ag-hs} proposes to attack \Cref{cj.hsp} via \Cref{cj.ag}. However, somewhat weaker version of the latter is sufficient for our purposes:

\begin{theorem}\label{th.main}
  Let $m>n$ be non-negative integers, and let the zero-dimensional non-trivial compact group $G$ act freely on $\mu^m$ and $\mu^n$ so that $\dim(\mu^n/G)\le m$. Suppose that there are no $G$-equivariant maps $\mu^n*G\to \mu^n$
for diagonal action on the join.

  Then, there are no $G$-equivariant maps $\mu^m\to \mu^n$.
\end{theorem}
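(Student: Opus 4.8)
The plan is to argue by contradiction: suppose there were a $G$-equivariant map $f:\mu^m\to\mu^n$, and deduce from it the existence of a $G$-equivariant map $\mu^n*G\to\mu^n$, contradicting the hypothesis. The bridge between the two is the free $m$-universality of $\mu^m$ supplied by \Cref{th.mug-univ}: since $m>n$ and $\dim(\mu^n/G)\le m$ by assumption, any free $G$-space built out of $\mu^n$ whose orbit space has dimension $\le m$ can be equivariantly mapped into $\mu^m$, after which $f$ carries us back down into $\mu^n$.

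The key object to feed into $\mu^m$ is the join $\mu^n*G$. First I would check that this join, with the diagonal $G$-action, is again a free $G$-space (freeness on the $\mu^n$ end is clear; on the interior and the $G$ end it follows since $G$ acts freely on itself) and that it is a compactum, being a join of compacta. Next I would bound the dimension of its orbit space. Here the relevant computation is that $(\mu^n*G)/G$ is built from $\mu^n/G$ and a copy of the cone/join data; using standard dimension estimates for joins together with \Cref{le.bd-dim-0} (or directly the fact that $\dim(\mu^n*G)/G$ exceeds $\dim(\mu^n/G)$ by a controlled amount coming from the $[0,1]*G$ factor, which is finite-dimensional since $G$ is zero-dimensional and the join with $[0,1]$ and with $G$ only adds boundedly), one gets $\dim((\mu^n*G)/G)\le \dim(\mu^n/G)+1 \le m$, invoking the hypothesis $\dim(\mu^n/G)\le m$ — or, if the inequality is tight, a slightly more careful accounting keeping the bound at $m$. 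Once this dimension bound is in hand, \Cref{th.mug-univ} applied to the closed invariant subspace $\emptyset\subseteq \mu^n*G$ (i.e. extending the empty map) yields a $G$-equivariant map $\mu^n*G\to\mu^m$. Composing with $f:\mu^m\to\mu^n$ produces the forbidden $G$-equivariant map $\mu^n*G\to\mu^n$, and we are done.

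I expect the main obstacle to be the dimension bookkeeping for $(\mu^n*G)/G$: one must verify that forming the join with $G$ and with the interval $[0,1]$ raises the dimension of the orbit space by at most the amount that keeps it $\le m$. The favorable circumstance is that $G$ is zero-dimensional, so $\mu^n*G$ is, roughly speaking, $\mu^n$ with a zero-dimensional "cone direction" attached, and the join $[0,1]*G$ contributing to the quotient is one-dimensional-ish; the strict inequality $m>n$ gives exactly the slack needed. A secondary point to get right is the identification of the orbit space of the join with the correct quotient — one should describe $\mu^n*G$ coordinate-wise as in \Cref{le.bn} and note that the diagonal $G$-action is free with orbit space fibering suitably over $[0,1]$, so that the covering-dimension estimate for maps (as in \cite[Theorem 4.39]{eng}, already used in \Cref{le.bd-dim-0}) applies. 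Everything else is a formal chain of equivariant maps.
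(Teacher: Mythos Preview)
Your overall strategy is exactly the paper's: show $\dim((\mu^n*G)/G)\le m$, invoke the free $m$-universality of $\mu^m$ (\Cref{th.mug-univ}) to get an equivariant map $\mu^n*G\to\mu^m$, and compose with the hypothetical $f$. Whether one extends from the empty closed invariant subspace, as you do, or from a single orbit, as the paper does, is immaterial.

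The one point where your sketch is imprecise---and you correctly flag it as the main obstacle---is the dimension bound. Your proposed inequality $\dim((\mu^n*G)/G)\le\dim(\mu^n/G)+1$ would only yield $\le m+1$ when $\dim(\mu^n/G)=m$, which does not suffice for $m$-universality; and your heuristic about a ``zero-dimensional cone direction'' does not by itself explain why the extra $+1$ is absorbed. The paper handles this via \Cref{le.orb-dim}, which proves
\[
\dim((X*G)/G)\le\max\bigl(\dim(X/G),\ \dim(X)+1\bigr).
\]
The key observation there is that the shear $(x,g)\mapsto(xg,g)$ identifies $(X\times_\Delta G)/G$ with $X$ itself, so the open interior $(X\times G\times(0,1))/G$ of the join quotient has dimension $\dim(X)+1$, not $\dim(X/G)+1$. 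For $X=\mu^n$ this gives $\max\bigl(\dim(\mu^n/G),\,n+1\bigr)\le m$, using precisely the strict inequality $m>n$ you mention at the end. With that lemma in hand, your argument goes through verbatim.
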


Notice that \Cref{th.main} assumes that \Cref{cj.bdh} is satisfied for a zero-dimensional compact metric group acting on a compact Hausdorff space of finite covering dimension. Before going into the proof, let us record the resolution of the weak Hilbert-Smith conjecture.

\begin{corollary}\label{cor.main}
  \Cref{cj.bdh} implies \Cref{cj.hsp}.
\end{corollary}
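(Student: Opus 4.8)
The plan is to obtain \Cref{cj.hsp} from \Cref{cj.bdh} by a short argument by contradiction that chains together \Cref{th.main} and \Cref{th.4}, using as external input the existence --- due to Dranishnikov and Ageev, and made explicit by Yang \cite{dran-acts,ag-charfree,yang-class} --- of free actions of $\bZ_p$ on Menger compacta whose orbit spaces are finite-dimensional (indeed, of the dimension-preserving actions with $\dim(\mu^k/\bZ_p)=k$).

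First I would assume \Cref{cj.bdh} and suppose, towards a contradiction, that \Cref{cj.hsp} fails: there is a prime $p$ and a connected finite-dimensional topological manifold $M$ carrying a free $\bZ_p$-action with $N:=\dim(M/\bZ_p)<\infty$. Set $n:=N$ and $m:=n+1$, and fix free $\bZ_p$-actions on $\mu^n$ and $\mu^m$ of the dimension-preserving kind above, so that in particular $\dim(\mu^n/\bZ_p)=n\le m$ and $\dim(\mu^m/\bZ_p)=m<\infty$.

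Next I would feed these data into the machinery. Since $\bZ_p$ is a non-trivial compact Hausdorff group and $\mu^n$ is a compact Hausdorff (in fact metrizable, finite-dimensional) space carrying a free $\bZ_p$-action, \Cref{cj.bdh} applied to this action says there is no $\bZ_p$-equivariant map $\mu^n*\bZ_p\to\mu^n$ for the diagonal action on the join. With that hypothesis in hand, \Cref{th.main} --- applied with $G=\bZ_p$, which is zero-dimensional, non-trivial and compact metric, and using $\dim(\mu^n/\bZ_p)=n\le m$ --- yields that there is no $\bZ_p$-equivariant map $\mu^m\to\mu^n$. Feeding this into \Cref{th.4} with the same $p$ and the same pair $m>n$, and using $\dim(\mu^m/\bZ_p)<\infty$, I conclude that every ANR space acted upon freely by $\bZ_p$ has orbit space of dimension strictly greater than $n$. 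But $M$, being a metrizable manifold, is an ANR (as recorded in \Cref{se.prel}), so $\dim(M/\bZ_p)>n=N=\dim(M/\bZ_p)$, which is absurd; this contradiction proves the corollary.

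I do not expect a genuine obstacle here: all the substance sits inside \Cref{th.main} (proved below) and \Cref{th.4} (quoted from \cite{ag-hs}), so the corollary is essentially bookkeeping. The one point that deserves a moment's care is the invocation of the auxiliary free $\bZ_p$-actions on $\mu^n$ and $\mu^m$ with finite-dimensional orbit spaces: one must make sure that the cited constructions supply actions meeting the precise numerical hypothesis $\dim(\mu^n/\bZ_p)\le m$ required by \Cref{th.main}, which is exactly why it is convenient to take the models satisfying $\dim(\mu^k/\bZ_p)=k$ and then simply choose $m>n$ (here $m=n+1$).
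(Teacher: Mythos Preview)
Your proposal is correct and follows essentially the same route as the paper: invoke Dranishnikov's dimension-preserving free $\bZ_p$-actions on $\mu^m$ and $\mu^n$, use \Cref{cj.bdh} to feed \Cref{th.main}, and then apply \Cref{th.4}. The only cosmetic difference is that the paper lets $n$ be arbitrary and concludes $\dim(M/\bZ_p)>n$ for every $n$, whereas you argue by contradiction with the single choice $n=\dim(M/\bZ_p)$ and $m=n+1$; the logical content is identical.
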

\begin{proof}
  Let $m>n$ be arbitrary positive integers.

  We first apply \Cref{th.main} to actions of $\bZ_p$ on $\mu^m$ and $\mu^n$ constructed as in \cite{dran-acts}, having the properties
\begin{equation*}
  \dim(\mu^m/\bZ_p)=m,\ \dim(\mu^n/\bZ_p)=n. 
\end{equation*}
The non-existence of an equivariant map $\mu^m\to \mu^n$ resulting from this theorem then ensures that the hypotheses of \Cref{th.4} are satisfied for this specific choice of free actions, and hence $\dim(M/\bZ_p)>n$. Since $n$ was arbitrary, this concludes the proof that the dimension of the orbit space cannot be finite.
\end{proof}

\begin{notation}\label{not.diag}
  In the context of a group $G$ acting on spaces $X$ and $Y$, we write $X\times_\Delta Y$ (with the group and actions being understood) to denote the Cartesian product $X\times Y$ equipped with the diagonal $G$-action.
\end{notation}

\begin{lemma}\label{le.orb-dim}
  Let $X$ be a compact space equipped with a free action by a compact group $G$. Then, we have the inequality
  \begin{equation}\label{eq:dim-ineq}
    \dim(X*G/G)\le \max(\dim(X/G),\dim(X)+1). 
  \end{equation}
\end{lemma}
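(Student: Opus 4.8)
The plan is to understand the join $X*G$ concretely and chase the dimension inequality through the quotient map. Writing points of $X*G$ as formal convex combinations $tx+(1-t)g$ with $t\in[0,1]$, $x\in X$, $g\in G$, the $G$-action is diagonal, so the quotient $X*G/G$ decomposes according to the value of $t$. First I would stratify: the open slice $t\in(0,1)$ is $G$-equivariantly $X\times_\Delta G\times(0,1)$, whose quotient is $(X\times G/G)\times(0,1)\cong X\times(0,1)$ since $G$ acts freely on the $G$-factor; the endpoint $t=0$ contributes a copy of $X/G$, and the endpoint $t=1$ contributes a copy of $G/G$, i.e. a point. So as a set $X*G/G$ is the union of a copy of $X/G$, an open cylinder homeomorphic to $X\times(0,1)$, and a point; the first two are glued along $X/G=(X\times\{0\})/G$ versus $X\times\{0^+\}$ via the (closed, surjective) quotient map $X\to X/G$.

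Next I would invoke the standard dimension-theory facts available under Convention~\ref{cv.cv}. The key tool is the countable closed sum theorem together with the fact that a finite (or countable closed) union controls covering dimension: $\dim(A\cup B)\le\max(\dim A,\dim B)$ when $A,B$ are closed. Concretely, decompose $X*G/G$ as $C_0\cup C_1$ where $C_0$ is the image of $\{t\le 1/2\}$ and $C_1$ the image of $\{t\ge 1/2\}$, both closed. On $C_1$ the coordinate $t$ ranges over $[1/2,1]$ and the map collapsing at $t=1$ realizes $C_1$ as a quotient of $X\times[1/2,1]$; away from the collapsed end it is $X\times(1/2,1]$, and the collapsed point adds nothing, so $\dim C_1\le\dim(X\times[1/2,1])=\dim X+1$ (using the product theorem $\dim(X\times I)\le\dim X+1$ for compacta, \cite[Theorem~1.5.16]{eng} or the analogue in \cite{hw}). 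On $C_0$ the relevant map is $X\times[0,1/2]\to C_0$ which is the identity in the $t$-direction except that at $t=0$ it factors through $X\to X/G$; thus $C_0$ is the mapping-cylinder-type gluing of $X\times(0,1/2]$ onto $X/G$. Splitting $C_0$ further as $(X/G\text{-end, }t=0)\cup(X\times[\epsilon,1/2])$ and letting the closed pieces $X\times\{0\}/G$ and the cylinders $X\times[1/n,1/2]$ exhaust, the countable closed sum theorem gives $\dim C_0\le\max(\dim(X/G),\dim(X\times[\,\cdot\,,1/2]))=\max(\dim(X/G),\dim X+1)$ — here one must be a little careful that the exhausting family is a countable closed cover of $C_0$, which it is since $\{t=0\}$ together with $\{t\ge 1/n\}$, $n\ge 2$, covers $\{t\le 1/2\}$ and these descend to closed sets downstairs. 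Combining, $\dim(X*G/G)\le\max(\dim C_0,\dim C_1)\le\max(\dim(X/G),\dim X+1)$, which is \Cref{eq:dim-ineq}.

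The step I expect to be the main obstacle is the bookkeeping at the collapsed end $t=0$: one must justify that the quotient map $X*G/G\to$ (its strata) behaves well enough for the closed sum theorem to apply, i.e. that the pieces really are closed in the metrizable quotient and that the quotient map $X\times[0,1/2]\to C_0$ is closed (which it is, being a continuous surjection from a compactum). A clean alternative for this step, avoiding the stratification gymnastics, is to note that $C_0$ is itself the image of the compactum $X\times[0,1/2]$ under a closed map whose nondegenerate fibers (over the $t=0$ part) are the $G$-orbits in $X\times\{0\}$, hence at most $\dim G=0$-dimensional wait—$\dim G$ need not be $0$ in general, but $X\to X/G$ is a closed map with fibers the $G$-orbits of dimension $\dim G$, and the Hurewicz-type inequality $\dim C_0\le\dim(\text{source})$ fails in the wrong direction; so I would instead directly use that $X/G$ embeds as a closed subset of $C_0$ and $C_0\setminus(X/G)$ is an $F_\sigma$ homeomorphic to $X\times(0,1/2]$, then apply the sum theorem for an $F_\sigma$ decomposition into a closed set plus countably many closed sets. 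This is routine dimension theory but is the only place requiring genuine care; everything else is the elementary identification of the three strata of $X*G/G$.
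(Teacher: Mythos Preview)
Your plan is correct and follows essentially the same route as the paper: stratify $X*G/G$ into the two endpoint pieces $X/G$ and $\{*\}$ and the open middle slice, identify the middle slice with $X\times(0,1)$ via the freeness of the $G$-action on the $G$-factor (the paper writes this explicitly as $(x,g)\mapsto(xg,g)$), and then invoke standard dimension-theory sum and product estimates. The only cosmetic difference is that the paper dispatches the union in one stroke by citing \cite[Corollary~1 to Theorem~III~2]{hw}, whereas you unpack the same content by hand with the countable closed sum theorem and an exhausting family of closed cylinders; both are routine and equivalent. (Minor note: your formula $tx+(1-t)g$ has the two endpoints labeled opposite to how you then use them, but your subsequent argument is internally consistent, so this is just a slip in notation.)
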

\begin{proof}
  The orbit space $X*G/G$ on the left hand side of \Cref{eq:dim-ineq} is the union of the two closed subspaces $X/G$ (at the endpoint $1\in [0,1]$; see the definition of joins in the introduction) and $\{*\}\cong G/G$ at $0\in [0,1]$, and the open subspace
  \begin{equation*}
    X\times_\Delta G\times J/G\cong (X\times_\Delta G/G)\times J
  \end{equation*}
  where $J=(0,1)$ and the $\Delta$ subscript is explained in
  \Cref{not.diag}.

\cite[Corollary 1 to Theorem III 2]{hw} implies the estimate
\begin{equation*}
  \dim(X*G/G)\le \max(\dim(X/G),\dim((X\times_\Delta G/G)\times J)). 
\end{equation*}
Hence, for our purposes, it suffices to prove that we have
\begin{equation}\label{eq:dim-ineq-aux}
  \dim((X\times_\Delta G/G)\times J) \le \dim(X)+1. 
\end{equation}
First, notice that the subadditivity of dimension under taking Cartesian products (\cite[Theorem III 4]{hw}) together with $\dim(J)=1$ bounds the left hand side of \Cref{eq:dim-ineq-aux} above by
\begin{equation*}
  \dim(X\times_\Delta G/G)+1. 
\end{equation*}
Finally, the desired conclusion follows from the fact that 
\begin{equation*}
  (x,g)\mapsto (xg,g) 
\end{equation*}
implements an isomorphism of $G$-spaces between $X\times G$ with the right hand factor action and $X\times_\Delta G$. This, in turn, ensures
\begin{equation*}
  X\times_\Delta G/G\cong X,
\end{equation*}
and hence 
\begin{equation*}
  \dim(X\times_\Delta G/G)=\dim(X). 
\end{equation*}
This finishes the proof of the lemma.
\end{proof}

As an immediate consequence of \Cref{le.orb-dim}, we obtain

\begin{corollary}\label{cor.orb-dim}
  Let $m>n$ be non-negative integers. For a free action of a compact group $G$ on $\mu^n$ with $\dim(\mu^n/G)\le m$ we have $\dim(\mu^n*G/G)\le m$.  \qedhere
\end{corollary}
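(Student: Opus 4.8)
The plan is to apply \Cref{le.orb-dim} directly with $X=\mu^n$. The Menger compactum $\mu^n$ is in particular compact, and the given action of $G$ on it is free, so the hypotheses of that lemma are met verbatim; it therefore yields
\begin{equation*}
  \dim(\mu^n*G/G)\le \max\bigl(\dim(\mu^n/G),\ \dim(\mu^n)+1\bigr).
\end{equation*}

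It then remains to check that each argument of the maximum is bounded by $m$. The first is $\le m$ by the standing hypothesis $\dim(\mu^n/G)\le m$. For the second, recall from \Cref{se.prel} that $\mu^n$ is by construction $n$-dimensional, namely $\dim(\mu^n)=n$, while $n+1\le m$ since $m$ and $n$ are integers with $m>n$; hence $\dim(\mu^n)+1\le m$ as well. Combining the two bounds gives $\dim(\mu^n*G/G)\le m$, as claimed.

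I expect no genuine obstacle: the corollary is an immediate specialization of \Cref{le.orb-dim}, the only extra inputs being the defining property $\dim(\mu^n)=n$ of the universal $n$-dimensional Menger compactum and the trivial inequality $n+1\le m$. This is precisely the dimension estimate one needs in order to verify the hypothesis $\dim(\mu^n*G/G)\le m$ (equivalently, that $\mu^n*G$ has orbit space of dimension $\le m$) that feeds into \Cref{th.main}.
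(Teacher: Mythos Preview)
Your argument is correct and matches the paper's approach exactly: the corollary is stated as ``an immediate consequence of \Cref{le.orb-dim}'' and proved by the \qedhere, and you have simply spelled out the two-line verification that $\max(\dim(\mu^n/G),\dim(\mu^n)+1)\le m$ using $\dim(\mu^n)=n$ and $m>n$.
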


\pf{th.main}
\begin{th.main}
  The freeness of all actions in sight imply that all orbits are equivariantly homeomorphic to $G$ itself.

  Now, our assumption that $\dim(\mu^n/G)\le m$ shows via \Cref{cor.orb-dim} below that we have
\begin{equation}\label{eq:ineq-works}
    \dim(\mu^n*G/G)\le m.
\end{equation}

The universality of the $G$-action on $\mu^m$ provided by \Cref{th.mug-univ} and \Cref{eq:ineq-works} ensure that any equivariant homeomorphism from a $G$-orbit in $\mu^n*G$ onto a $G$-orbit in $\mu^m$ extends to an equivariant map
  \begin{equation*}
    \mu^n*G\to \mu^m. 
  \end{equation*}
  Composition with an equivariant map $\mu^m\to \mu^n$ as in the statement of the theorem would then imply the existence of an equivariant map $\mu^n*G\to \mu^n$, contradicting the assumption.
\end{th.main}

\begin{remark}
  \Cref{th.4,th.main} are stated in such generality to emphasize their connection with Ageev's original formulation \cite{ag-hs}. However, working with the universal actions on Menger compacta $\mu^n$ such that $\mathrm{dim}(\mu^n/\mathbb{Z}_p)=n$ would still confirm \Cref{cj.hsp}.

  As mentioned in the introduction, the actions in question are proved unique up to isomorphism (for given $n$) in \cite[Theorem B]{ag-charfree} and their existence is proven by \cite[Theorem 1]{dran-acts}. Remark 1 on p.~228 therein argues that the orbit space for the action on $\mu^n$ being constructed in the proof of the theorem is indeed $n$.
\end{remark}


\subsection*{Acknowledgements}

We acknowledge a suggestion by A.~Volovikov that the conjecture made in \cite{BDH} could prove \Cref{cj.ag}, and discussions with H.~Toru\'nczyk and with S.~Spie\.{z}, who suggested using the universality of Menger compacta. We thank M.~Bestvina and R.~Edwards for confirmation of their unpublished result (Theorem~\ref{th.eb}) and sharing their inside regarding its proof.  We would also like to thank S.~M.~Ageev for discussions about the paper \cite{ag-hs}.  Finally, we are grateful to P.~M.~Hajac and B.~Passer for numerous conversations on the contents of this paper and related topics contributing considerably to its improvement.

This work is part of the project Quantum Dynamics sponsored by EU-grant RISE 691246 and Polish Government grants  
3542/H2020/2016/ and  328941/PnH/201.

A.C. was partially supported by NSF grants DMS-1565226 and DMS-1801011.

L.D. is grateful for his support at IMPAN provided by Simons-Foundation grant 346300 and a Polish Government MNiSW 2015–2019 matching fund.

Finally, M.T. was partially supported by the project Diamentowy Grant No.~DI2015 006945 financed by the Polish Ministry of Science and Higher Education.


\def\polhk#1{\setbox0=\hbox{#1}{\ooalign{\hidewidth
  \lower1.5ex\hbox{`}\hidewidth\crcr\unhbox0}}}


\Addresses

\end{document}